\documentclass[a4paper,11pt]{amsart}
\usepackage{mathrsfs}
\usepackage[all]{xy}
\usepackage{amsmath,amssymb,amscd,bbm,amsthm,mathrsfs}
\usepackage{graphicx}
\newtheorem{thm}{Theorem}[section]
\newtheorem{lem}{Lemma}[section]
\newtheorem{cor}{Corollary}[section]
\newtheorem{prop}{Proposition}[section]
\newtheorem{rem}{Remark}[section]

\setlength{\textwidth}{160mm} \addtolength{\hoffset}{-16mm}
\begin{document}
\numberwithin{equation}{section}

\title[ The Linear algebra in  the quaternionic  pluripotential theory]
 {The  Linear algebra in  the quaternionic  pluripotential theory}
\author{  Wei Wang}
\thanks{
Supported by National Nature Science Foundation in China (No.
11571305)  }\thanks{   Department of Mathematics,
Zhejiang University, Zhejiang 310027,
 P. R. China, Email:   wwang@zju.edu.cn}

\begin{abstract} We clarify  the  linear algebra used in  the quaternionic  pluripotential theory so that proofs of several results there can be greatly simplified. In particular, we characterize and normalize real $2$-forms with respect to the quaternionic structure,  and show  that  the Moore determinant of   a quaternionic hyperhermitian matrix  is the coefficient  of  the exterior product of the associated real $2$-form. As a corollary, the quaternionic Monge-Amp\`{e}re operator is the coefficient  of the exterior product of the Baston operator.
\end{abstract}
\maketitle
\begin{center}
\begin{minipage}{135mm}
\end{minipage}
\end{center}
\section{Introduction}

Alesker \cite{alesker1} \cite{alesker4} proved   a quaternionic version of Chern-Levine-Nirenberg estimate and extended the definition of the quaternionic Monge-Amp\`{e}re operator to continuous quaternionic plurisubharmonic functions. The quaternionic Monge-Amp\`{e}re operator on quaternionic manifolds was introduced and investigated by Alesker, Shelukhin and Verbitsky et. al. \cite{AS}-\cite{alesker6}. More generally, it is interesting to develope pluripotential theory  on calibrated manifolds \cite{harvey1}-\cite{harvey2}. By using the twistor method  in complex geometry, Alesker observed \cite{alesker2} (see also \cite{AS2}) the coincidence between the quaternionic Monge-Amp\`{e}re operator  and the  $n$-th exterior power of  the Baston operator $\triangle $.
The Baston operator $\triangle$ is the first operator of $0$-Cauchy-Fueter complex, which is known  explicitly \cite{baston} \cite{bS} \cite{Wang} \cite{wang-mfd}.
Motivated by this fact, we \cite{wan-wang} introduced the first-order differential operators $d_0$ and $d_1$ acting on the quaternionic version of differential forms. The behavior of $d_0,d_1$ and $\triangle=d_0d_1$ is very similar to $\partial,\overline{\partial}$ and $\partial \overline{\partial}$ in several complex variables,  and several results in the complex pluripotential theory (cf. e.g. \cite{bed} \cite{klimek}) were extended to the quaternionic case \cite{wan4}-\cite{wan3}. The purpose of this paper is to clarify the  linear algebra used in  the quaternionic  pluripotential theory in \cite{wan-wang} to simplify proofs of several results there.

Denote by $M_{\mathbb{F}}(p,m)$ the space of $\mathbb{F}$-valued $(p\times m)$-matrices, where $\mathbb{F}=\mathbb{R},\mathbb{C},\mathbb{H}$.
Motivated by
 the well known embedding of the quaternionic algebra $\mathbb{H}$ into End$(\mathbb{C}^{2})$ given by
\begin{equation}\label{2.1}x_0+\textbf{i}x_1+\textbf{j}x_2+\textbf{k}x_3 \mapsto\left(
                                                                                 \begin{array}{rr}
                                                                                   x_0+\textbf{i}x_1 & -x_2-\textbf{i}x_3 \\
                                                                                   x_2-\textbf{i}x_3 & x_0-\textbf{i}x_1 \\
                                                                                 \end{array}
                                                                               \right),
\end{equation}we define the embedding
\begin{equation*}
   \tau:M_{\mathbb{F}}(p,m)\longrightarrow M_{\mathbb{C}}(2p,2m)
\end{equation*}
as follows.
For a quaternionic $p\times m$-matrix $\mathcal{M}$, we   write $\mathcal{M}=a+b\mathbf{j}$ for some    complex matrices $a, b\in M_{\mathbb{C}}( p, m)$.
Then   $ {\tau}(\mathcal{M})$ is  the complex $(2p\times 2m)$-matrix
\begin{equation}\label{eq:tau-A}
    {\tau}(\mathcal{M}):= \left(
                                       \begin{array}{rr}
                                          {a} &-{b} \\
                                         \overline {b} & \overline{{a}} \\
                                       \end{array} \right).
\end{equation}

Let $\wedge^{ 2k}\mathbb{C}^{2n}$ be the $ k$-th exterior product of $\mathbb{C}^{2n}$, $ k=1,\ldots,  n$.
There exists a real linear action $ \rho(\textbf{j})$ on $\mathbb{C}^{2n}$ naturally (\ref{eq:rouj}), which induces an action on $\wedge^{2k}\mathbb{C}^{2n}$.
An element $\varphi$ of $\wedge^{2k}\mathbb{C}^{2n}$ is called  {\it real} if $\rho(\textbf{j})\varphi=\varphi$. Denote by $\wedge^{2k}_{\mathbb{R}}\mathbb{C}^{2n}$ the subspace of all real elements in $\wedge^{2k}\mathbb{C}^{2n}$, which  is the counterpart of the space of $(k,k)$-forms in complex analysis.
Fix a basis
$\{\omega^0,\omega^1,\ldots,\omega^{2n-1}\}$ of $\mathbb{C}^{2n}$.
We characterize and normalize real $2$-forms as follows.
\begin{thm} \label{thm:real-form} (1) For a complex skew symmetric matrix  $M =(M_{ AB})\in M_{\mathbb{C}}(2n,2n )$, the $2$-form
\begin{equation}\label{eq:real-2form}
   \omega =\sum_{A,B=0}^{2n-1} M_{ AB}\,\omega^A\wedge\omega^B
\end{equation}  is real if and only if there exists a hyperhermitian $n\times n$-matrix $\mathcal{M}=({\mathcal{M}}_{jk})$, i.e. ${\mathcal{M}}_{jk}=\overline{\mathcal{M}_{kj}}$, such that
\begin{equation}\label{eq:real-2form'}
   M=\tau(\mathcal{M})J,
\end{equation}where
\begin{equation}\label{eq:J-matrix}
   J=\left(
                                       \begin{array}{cc}
                                           0& I_n \\
                                      -I_n& 0\\
                                       \end{array} \right).
\end{equation}

 (2) When $\omega$  in (\ref{eq:real-2form}) is  real, there exists a quaternionic unitary matrix $\mathcal{E}\in \text{U}_{\mathbb{H}}(n)$ such that
\begin{equation}\label{eq:normal-matrix}
   \tau(\mathcal{E})^tM\tau(\mathcal{E})=\left(
    \begin{array}{cc}
                                         0 &
    \begin{array}{ccc}\nu_0& &\\&\ddots&
                                         \\&  & \nu_{n-1} \\
                                                                               \end{array}  \\
                                            \begin{array}{ccc}-\nu_0& &\\&\ddots&
                                         \\&  & -\nu_{n-1} \\
                                                                               \end{array}   &0 \\
                                       \end{array} \right)
\end{equation}for some real numbers $\nu_0,\ldots,\nu_{n-1}$.
Namely, we can normalize $\omega$ as
\begin{equation}\label{eq:normal-form}
                                         \omega =2\sum_{ l=0}^{n-1} \nu_{ l}\widetilde{\omega}^l\wedge\widetilde{\omega}^{l+n}
                                       \end{equation}
    with $\widetilde {\omega}^A= \mathcal{E^*}.{\omega}^A$. In particular, $\omega$ is strongly positive if and only if $\nu_{ l}\geq 0$, $l=0£¬\ldots,n-1$, and at least one is positive.
 \end{thm}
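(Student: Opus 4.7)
The plan splits along the two parts, with the key technical device being the matrix $J$ of (\ref{eq:J-matrix}) and how it intertwines with the image of $\tau$.

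For Part~(1), I would first render $\rho(\mathbf{j})$ in coordinates: being $\mathbb{R}$-linear, $\mathbb{C}$-antilinear and of square $-I$, a direct check on the basis identifies it with the map $v\mapsto J\bar v$. The induced action on $\omega=\sum M_{AB}\omega^A\wedge\omega^B$ replaces $M$ by $J\bar MJ^t$, and using $J^t=-J$ together with skew-symmetry of $M$, the reality condition $\rho(\mathbf{j})\omega=\omega$ boils down to
$$
M=-J\bar{M}J.
$$
The key algebraic observation is then that a complex $(2n\times 2n)$-matrix $N$ has the block form $\tau(\mathcal{N})$ for some quaternionic $\mathcal{N}$ if and only if $N=-J\bar{N}J$; this is a direct entry-wise verification from (\ref{eq:tau-A}). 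Applying it to $N=MJ^{-1}$ supplies $\mathcal{M}$ with $M=\tau(\mathcal{M})J$. Writing $\mathcal{M}=\alpha+\beta\mathbf{j}$ in complex $n\times n$-blocks, skew-symmetry $M^t=-M$ reduces to the two block relations $\alpha^*=\alpha$ and $\beta^t=-\beta$, which together are exactly the hyperhermitian condition on $\mathcal{M}$.

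For Part~(2), the structural properties of $\tau$ I rely on are multiplicativity $\tau(\mathcal{A}\mathcal{B})=\tau(\mathcal{A})\tau(\mathcal{B})$, the adjoint compatibility $\tau(\mathcal{A}^*)=\tau(\mathcal{A})^*$, and the intertwining identity $J\tau(\mathcal{A})=\overline{\tau(\mathcal{A})}J$; for $\mathcal{E}\in\text{U}_{\mathbb{H}}(n)$ the latter yields the symplectic identity $\tau(\mathcal{E})^tJ\tau(\mathcal{E})=J$. Invoking the quaternionic spectral theorem, pick $\mathcal{E}$ diagonalising $\mathcal{M}$ so that $\mathcal{E}^*\mathcal{M}\mathcal{E}$ is real diagonal. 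Factoring $J$ out on the right via the symplectic identity gives
$$
\tau(\mathcal{E})^tM\tau(\mathcal{E})=\bigl[\tau(\mathcal{E})^t\tau(\mathcal{M})\overline{\tau(\mathcal{E})}\bigr]J,
$$
and the bracketed matrix is Hermitian (as $\tau(\mathcal{M})$ is) and satisfies the reality condition of Part~(1), hence equals $\tau(\mathcal{N})$ for some hyperhermitian $\mathcal{N}$. A blockwise expansion, using that $\mathcal{E}$ carries the eigenbasis data of the quaternionic spectral theorem, forces $\mathcal{N}$ to be real diagonal with entries $\nu_0,\ldots,\nu_{n-1}\in\mathbb{R}$; then $\tau(\mathcal{N})J$ is precisely (\ref{eq:normal-matrix}), and reading off the transformed basis yields (\ref{eq:normal-form}).

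Strong positivity is then a direct consequence of the normal form: each elementary summand $\widetilde\omega^l\wedge\widetilde\omega^{l+n}$ is itself strongly positive (it is the quaternionic analogue of the basic $(1,1)$-form $d\bar z_l\wedge dz_l$), so a non-negative linear combination is strongly positive exactly when all $\nu_l\geq 0$ and at least one is strictly positive. The main obstacle will be the block-level bookkeeping in Part~(2): although the symplectic identity instantly yields the factorisation $\tau(\mathcal{E})^tM\tau(\mathcal{E})=\tau(\mathcal{N})J$, pinning down the resulting $\mathcal{N}$ as real diagonal requires careful tracking of how transposition and complex conjugation of $\mathcal{E}$ act on the block decomposition $\mathcal{M}=\alpha+\beta\mathbf{j}$ under $\tau$.
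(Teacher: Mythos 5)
Your Part (1) is correct and is essentially the paper's own argument: the reality condition becomes $J\overline{M}=MJ$, the characterization of the image of $\tau$ (Proposition \ref{prop:tau} (2)) then yields $\mathcal{M}$ with $M=\tau(\mathcal{M})J$, and skew-symmetry of $M$ translates into $\alpha^*=\alpha$, $\beta^t=-\beta$, i.e.\ hyperhermitianity, exactly as in the paper (where $\mathcal{M}=-b+a\mathbf{j}$).

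Part (2), however, contains a genuine gap: taking $\mathcal{E}$ to be a unitary with $\mathcal{E}^*\mathcal{M}\mathcal{E}$ real diagonal is the wrong choice, and the claim that the bracketed matrix is then ``forced to be real diagonal'' fails. Both the transpose and the intertwining identity introduce complex conjugations: $\tau(\mathcal{E})^t=\overline{\tau(\mathcal{E}^*)}$ and $J\tau(\mathcal{E})=\overline{\tau(\mathcal{E})}J$, while $\overline{\tau(\mathcal{X})}=\tau(\sigma(\mathcal{X}))$ for the automorphism $\sigma(e+f\mathbf{j}):=\overline{e}+\overline{f}\,\mathbf{j}=-\mathbf{j}(e+f\mathbf{j})\mathbf{j}$. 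Hence your factorisation actually gives
\begin{equation*}
\tau(\mathcal{E})^tM\tau(\mathcal{E})=\tau\bigl(\sigma(\mathcal{E})^*\,\mathcal{M}\,\sigma(\mathcal{E})\bigr)J,
\end{equation*}
so the matrix that must come out real diagonal is $\sigma(\mathcal{E})^*\mathcal{M}\sigma(\mathcal{E})$, not $\mathcal{E}^*\mathcal{M}\mathcal{E}$, and these differ in general. Concretely, let $\mathcal{M}=\alpha$ be the complex hermitian matrix with $\alpha_{11}=\alpha_{22}=1$, $\alpha_{21}=\overline{\alpha_{12}}=e^{\mathbf{i}\pi/4}$, and let $\mathcal{E}=e$ be the complex unitary whose columns are a unitary eigenbasis of $\alpha$; then
\begin{equation*}
\tau(\mathcal{E})^tM\tau(\mathcal{E})=\left(\begin{array}{cc}0& e^t\alpha\overline{e}\\ -e^*\overline{\alpha}\,e&0\end{array}\right),
\end{equation*}
and $e^*\overline{\alpha}e$ is not diagonal (its off-diagonal entry is $-\mathbf{i}$), so the right-hand side is not of the form (\ref{eq:normal-matrix}). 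The repair is precisely the twist used in the paper: diagonalize $\mathcal{M}$ by $\mathcal{B}=e+f\mathbf{j}$ and set $\mathcal{E}:=\overline{e}+\overline{f}\,\mathbf{j}=\sigma(\mathcal{B})$ (equivalently, apply the spectral theorem to the hyperhermitian matrix $-\mathbf{j}\mathcal{M}\mathbf{j}$); then $\sigma(\mathcal{E})=\mathcal{B}$ and the displayed identity gives (\ref{eq:normal-matrix}). Since the theorem only asserts existence of some $\mathcal{E}$, your strategy is salvageable, but as written the key step fails. Note also that your positivity paragraph only establishes the ``if'' direction; that a strongly positive $\omega$ cannot have a negative $\nu_l$ requires a separate argument (e.g.\ wedging with the complementary elementary strongly positive $(2n-2)$-form), a point the paper's proof likewise leaves implicit.
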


 The Moore determinant denoted by det is a determinant defined for quaternionic  hyperhermitian matrices
and takes real values.
Because of the noncommutativity of quaternionic numbers, the Moore determinant   is very complicated. Here we find a concrete expression of the Moore determinant of  a quaternionic  hyperhermitian matrix as follows.

More generally, consider the homogeneous polynomial $\text{det}(\lambda_1\mathcal{M}_1+\ldots+\lambda_n\mathcal{M}_n)$ in real variables $\lambda_1,\ldots,\lambda_n$ of degree $n$. The coefficient of the monomial $\lambda_1\cdots\lambda_n$ divided by $n!$ is called the {\it mixed discriminant} of the hyperhermitian matrices $\mathcal{M}_1,\ldots,\mathcal{M}_n$, and it is denoted by $\text{det}(\mathcal{M}_1,\ldots,\mathcal{M}_n)$. In particular, when $\mathcal{M}_1=\ldots=\mathcal{M}_n=\mathcal{M }$, $\text{det}(\mathcal{M}_1,\ldots,\mathcal{M}_n)=\text{det}(\mathcal{M })$.
For  skew symmetric matrices $M^{(t)} \in M_{\mathbb{C}}(2n,2n)$, $t=1,\ldots, n$, such that  $2$-forms $\omega_t=\sum_{A,B=0}^{2n-1} M^{(t)}_{AB}\, \omega^A\wedge\omega^B  $  are real, we define $\bigtriangleup_n(M^{(1)} ,\ldots,M^{(n)} )$ by
\begin{equation}\label{eq:triangleup-n}\bigtriangleup_n\left(M^{(1)} ,\ldots,M^{(n)} \right)\Omega_{2n}:=
   \omega_1\wedge\cdots\wedge\omega_n,
\end{equation}where\begin{equation}\label{eq:omega}\Omega_{2n}:=\omega^0\wedge\omega^{  n }\cdots\wedge
\omega^{ n-1} \wedge
\omega^{2n-1}\in \wedge^{2n}_{\mathbb{R}}\mathbb{C}^{2n}.\end{equation}

 \begin{thm} \label{thm:det}For    hyperhermitian matrices $\mathcal{M}_1,\ldots, \mathcal{M}_n\in M_{\mathbb{ H}}( n, n )$,   we have
    \begin{equation}\label{eq:det}2^n n!~\text{det}~( \mathcal{M}_1,\ldots,\mathcal{M}_n)=
       \triangle_n\left( \tau(\mathcal{M}_1)J,\ldots, \tau(\mathcal{M}_n)J  \right).
    \end{equation}
 \end{thm}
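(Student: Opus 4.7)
The strategy combines the normalization in Theorem \ref{thm:real-form}(2) with a polarization argument. Observe that both sides of \eqref{eq:det} are symmetric and multilinear in $\mathcal{M}_1,\ldots,\mathcal{M}_n$: the mixed discriminant is by definition the polarization of the Moore determinant, and $\triangle_n$ is symmetric multilinear because the associated real $2$-forms $\omega_t$ commute in the exterior algebra. Hence it suffices to establish the diagonal identity
\begin{equation*}
\omega^n = 2^n n!\,\text{det}(\mathcal{M})\,\Omega_{2n},\qquad \omega := \sum_{A,B=0}^{2n-1}\bigl(\tau(\mathcal{M})J\bigr)_{AB}\,\omega^A\wedge\omega^B,
\end{equation*}
for every hyperhermitian $\mathcal{M}\in M_{\mathbb{H}}(n,n)$. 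Substituting $\mathcal{M}=\lambda_1\mathcal{M}_1+\cdots+\lambda_n\mathcal{M}_n$ and comparing the coefficients of $\lambda_1\cdots\lambda_n$ on both sides recovers \eqref{eq:det}: on the Moore side the coefficient equals $n!\,\text{det}(\mathcal{M}_1,\ldots,\mathcal{M}_n)$ by definition of the mixed discriminant, while on the wedge side the same coefficient equals $n!\,\omega_1\wedge\cdots\wedge\omega_n = n!\,\triangle_n(\tau(\mathcal{M}_1)J,\ldots,\tau(\mathcal{M}_n)J)\,\Omega_{2n}$, using that $2$-forms commute.

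For the diagonal case I apply Theorem \ref{thm:real-form}(2) to pick $\mathcal{E}\in\text{U}_{\mathbb{H}}(n)$ normalizing $\omega$ as $\omega = 2\sum_{l=0}^{n-1}\nu_l\,\widetilde{\omega}^l\wedge\widetilde{\omega}^{l+n}$. The normalized coefficient matrix is $\tau(\text{diag}(\nu_0,\ldots,\nu_{n-1}))\,J$, which via the identity $\tau(\mathcal{E}^*)\tau(\mathcal{M})\tau(\mathcal{E})=\tau(\mathcal{E}^*\mathcal{M}\mathcal{E})$ identifies $\mathcal{E}^*\mathcal{M}\mathcal{E}$ with $\text{diag}(\nu_0,\ldots,\nu_{n-1})$; hence by the unitary invariance of the Moore determinant, $\text{det}(\mathcal{M})=\nu_0\cdots\nu_{n-1}$. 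Since the $2$-forms $\widetilde{\omega}^l\wedge\widetilde{\omega}^{l+n}$ mutually commute, a direct multinomial expansion yields
\begin{equation*}
\omega^n = 2^n n!\,\nu_0\cdots\nu_{n-1}\,\widetilde{\Omega}_{2n},
\end{equation*}
where $\widetilde{\Omega}_{2n}$ denotes the expression \eqref{eq:omega} with the $\omega^A$ replaced by $\widetilde{\omega}^A$.

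The remaining step, which I expect to be the main obstacle, is to check that $\widetilde{\Omega}_{2n}=\Omega_{2n}$. This follows because $\mathcal{E}\in\text{U}_{\mathbb{H}}(n)$ embeds as $\tau(\mathcal{E})\in\text{Sp}(n)\subset\text{SU}(2n)$, so $\det\tau(\mathcal{E})=1$, giving $\widetilde{\omega}^0\wedge\cdots\wedge\widetilde{\omega}^{2n-1}=\omega^0\wedge\cdots\wedge\omega^{2n-1}$; rearranging the wedge factors into the order of \eqref{eq:omega} produces the same sign on both sides, hence $\widetilde{\Omega}_{2n}=\Omega_{2n}$. Combining with the previous paragraph gives $\omega^n = 2^n n!\,\text{det}(\mathcal{M})\,\Omega_{2n}$, which establishes the diagonal identity and, via the polarization above, yields \eqref{eq:det}.
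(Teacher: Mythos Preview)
Your argument follows the same route as the paper: reduce to the diagonal case by polarization, normalize $\omega$ via Theorem~\ref{thm:real-form}(2), compute $\omega^n$ in terms of the $\nu_l$, identify $\det(\mathcal{M})=\prod_l\nu_l$, and verify $\widetilde{\Omega}_{2n}=\Omega_{2n}$. The only structural difference is in the last step: the paper argues via the $\text{U}_{\mathbb H}(n)$-invariance of $\beta_n$ (symplecticity of $\tau(\mathcal{E}^*)$ gives $\mathcal{E}^*.\beta_n=\beta_n$, hence $\mathcal{E}^*.\Omega_{2n}=\Omega_{2n}$), whereas you use $\tau(\mathcal{E}^*)\in\text{Sp}(2n,\mathbb C)\subset\text{SL}(2n,\mathbb C)$ to get determinant~$1$ directly. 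Both are valid.

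There is, however, a small slip in your identification $\mathcal{E}^*\mathcal{M}\mathcal{E}=\text{diag}(\nu_0,\ldots,\nu_{n-1})$. The normalization \eqref{eq:normal-matrix} reads $\tau(\mathcal{E})^t\,\tau(\mathcal{M})J\,\tau(\mathcal{E})=\tau(D)J$ with $D=\text{diag}(\nu_l)$, and $\tau(\mathcal{E})^t$ is \emph{not} $\tau(\mathcal{E}^*)=\overline{\tau(\mathcal{E})}^t$. If you unwind using $J\tau(\mathcal{E})=\overline{\tau(\mathcal{E})}\,J$ (Proposition~\ref{prop:tau}(2)) and $\tau(\mathcal{E})^t=\overline{\tau(\mathcal{E}^*)}$, you obtain after conjugation $\tau(\mathcal{E}^*)\,\overline{\tau(\mathcal{M})}\,\tau(\mathcal{E})=\tau(D)$, which says $(\mathbf{j}\mathcal{E})^*\mathcal{M}(\mathbf{j}\mathcal{E})=D$ rather than $\mathcal{E}^*\mathcal{M}\mathcal{E}=D$. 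The conclusion $\det(\mathcal{M})=\prod_l\nu_l$ is unaffected since $\mathbf{j}\mathcal{E}$ is still unitary; the paper avoids this bookkeeping by tracking the diagonalizing unitary $\mathcal{B}$ from Proposition~\ref{prop:diagonal} through the proof of Theorem~\ref{thm:real-form}(2) and reading off \eqref{eq:det-nu} directly.
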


Alesker introduced in \cite{alesker1} the mixed quaternionic
Monge-Amp\`{e}re operator det$(u_1,\ldots,u_n)$ for  real $C^2$ functions
$u_1,\ldots,u_n $ as
\begin{equation*}\text{det}(u_1,\ldots,u_n):=\text{det}~\left(\left(\frac{\partial^2u_1}{\partial \overline{{ q_l}}\partial {q}_k}(q)\right),\ldots,\left(\frac{\partial^2u_n}{\partial  \overline{ {q_l}}\partial {q}_k}(q)\right)\right).\end{equation*}
 Recall that  for a real $ C^2$ function $u$, the {\it Baston operator} is
  \begin{equation}\label{eq:triangle u}\triangle u= \sum_{A,B=0}^{2n-1}   \triangle_{AB} u\, \omega^A\wedge\omega^B  ,\end{equation}
 where
 \begin{equation}\label{2.10}\triangle_{AB}u:=\frac{1}{2}\left(\nabla_{A 0' }\nabla_{ B 1' }u-\nabla_{B0' }\nabla_{  A1' }u\right),
\end{equation}i.e.
$2\triangle_{AB}$ is the determinant of the $(2\times2)$-submatrix of $A$-th and $B$-th rows in
\begin{equation}\label{eq:nabla-jj'-new}\left(
                             \begin{array}{cc}
                               \nabla_{00' } & \nabla_{01' } \\
                                                              \vdots&\vdots\\
                               \nabla_{ l 0' } & \nabla_{ l 1' } \\
                               \vdots&\vdots\\ \nabla_{ n 0' } & \nabla_{  n 1' } \\ \vdots&\vdots\\
                               \nabla_{(  n+l)0 '} & \nabla_{(  n+l)1 '} \\ \vdots&\vdots\\
                                                            \end{array}
                           \right):=\left(
                                      \begin{array}{cc}
                                      \partial_{x_{0}}+\textbf{i}\partial_{x_{1}} & -\partial_{x_{2}}-\textbf{i}\partial_{x_{3}} \\
                                                                                \vdots&\vdots\\
                                         \partial_{x_{4l}}+\textbf{i}\partial_{x_{4l+1}} & -\partial_{x_{4l+2}}-\textbf{i}\partial_{x_{4l+3}} \\ \vdots&\vdots    \\ \partial_{x_{2}}-\textbf{i}\partial_{x_{3}} & \partial_{x_{0}}-\textbf{i}\partial_{x_{1}} \\   \vdots&\vdots\\
                                        \partial_{x_{4l+2}}-\textbf{i}\partial_{x_{4l+3}} & \partial_{x_{4l}}-\textbf{i}\partial_{x_{4l+1}} \\
                                        \vdots&\vdots
                                      \end{array}
                                    \right).
\end{equation}
$\nabla_{A\alpha }$'s above and
$\tau(\mathcal{M})$ in (\ref{eq:tau-A}) are relabeling of corresponding notations in \cite{wan-wang} (cf. Remark \ref{rem:relabel}). We use this relabeling in this paper because it makes some of proofs more transparent.

\begin{thm}\label{thm:det'} Let $u_1,u_2,\ldots,u_n$ be real $C^2$ functions on
$\mathbb{H}^n$. Then we have:
\begin{equation}\label{2.17}\triangle
u_1\wedge\ldots\wedge\triangle
u_n=n!~\text{det}~(u_1,u_2,\ldots,u_n)\Omega_{2n}.
\end{equation}
\end{thm}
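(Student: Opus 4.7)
The plan is to write each $\triangle u_t$ as a real $2$-form of the shape $\sum M^{(t)}_{AB}\omega^A\wedge\omega^B$ whose associated hyperhermitian matrix (in the sense of Theorem \ref{thm:real-form}(1)) is, up to a universal factor, the quaternionic Hessian $\mathcal{H}_{u_t}=\bigl(\partial^2 u_t/\partial\overline{q_k}\partial q_l\bigr)$, and then to invoke Theorem \ref{thm:det} directly. This reduces the whole claim to Theorem \ref{thm:det} plus one identification lemma.

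First I would check that for real $u$ the $2$-form $\triangle u$ is real. The definitions of $\nabla_{A\alpha'}$ in (\ref{eq:nabla-jj'-new}) show that complex conjugation combined with the swap $A\leftrightarrow A+n$ for $A\in\{0,\ldots,n-1\}$ interchanges the two $n\times 2$ blocks of the nabla matrix in a sign-compatible way; together with $\overline{u}=u$ this implies that $(\triangle_{AB}u)$ satisfies the criterion of Theorem \ref{thm:real-form}(1). Hence there is a unique hyperhermitian $\mathcal{M}_u$ with $(\triangle_{AB}u)=\tau(\mathcal{M}_u)J$. The core step is then to identify this $\mathcal{M}_u$ with $\tfrac{1}{2}\mathcal{H}_u$. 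Writing $\mathcal{H}_u=\mathcal{A}+\mathcal{B}\mathbf{j}$ for complex matrices $\mathcal{A},\mathcal{B}$ of mixed complex second partials of $u$, one evaluates the four $n\times n$ blocks of $\tau(\mathcal{H}_u)J$ via (\ref{eq:tau-A}) and (\ref{eq:J-matrix}) and matches each block entrywise against $2\triangle_{AB}u=\nabla_{A0'}\nabla_{B1'}u-\nabla_{B0'}\nabla_{A1'}u$ expressed through the real partials in (\ref{eq:nabla-jj'-new}). The four comparisons (top-left, top-right, bottom-left, bottom-right blocks) each reduce to a $2\times 2$ determinant identity among products of real first-order operators.

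With this identification in hand, Theorem \ref{thm:det} applied to $\mathcal{M}_{u_t}=\tfrac{1}{2}\mathcal{H}_{u_t}$ together with $n$-multilinearity of the mixed discriminant gives
\[
\triangle_n\bigl(\tau(\mathcal{M}_{u_1})J,\ldots,\tau(\mathcal{M}_{u_n})J\bigr)=2^n n!\,\text{det}(\mathcal{M}_{u_1},\ldots,\mathcal{M}_{u_n})=n!\,\text{det}(u_1,\ldots,u_n),
\]
and then (\ref{eq:triangleup-n}) reads off $\triangle u_1\wedge\cdots\wedge\triangle u_n=n!\,\text{det}(u_1,\ldots,u_n)\,\Omega_{2n}$, which is (\ref{2.17}). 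The main obstacle is the identification $\mathcal{M}_u=\tfrac{1}{2}\mathcal{H}_u$: although mechanical, it is the only place where the three convention-dependent normalisations — the factor $\tfrac{1}{2}$ in (\ref{2.10}), the post-multiplication by $J$ in (\ref{eq:real-2form'}), and the $2^n n!$ in (\ref{eq:det}) — have to conspire precisely to produce the clean coefficient $n!$ in (\ref{2.17}). Once that single equality is checked, the rest of the proof is purely formal.
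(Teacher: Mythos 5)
Your proposal is correct and follows essentially the same route as the paper: the paper likewise proves the key identity $\tau\bigl(\partial^2u/\partial\overline{q_l}\partial q_k\bigr)J=2(\triangle_{AB}u)$ by expanding the Cauchy--Fueter second derivatives in the $\nabla_{A\alpha}$'s and checking the block/conjugation relations, and then applies Theorem \ref{thm:det} to the quaternionic Hessians. Your only cosmetic deviation is rescaling to $\mathcal{M}_u=\tfrac12\mathcal{H}_u$ before invoking Theorem \ref{thm:det} (and routing the identification through Theorem \ref{thm:real-form}(1)), whereas the paper applies Theorem \ref{thm:det} directly to $\mathcal{H}_{u_j}$ and absorbs the factor $2^n$; the bookkeeping of constants works out identically.
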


The complexified version of this theorem was proved by
Alesker  \cite{alesker2}   by using the twistor method of complex geometry. The proof given in the appendix (Theorem A.1) of \cite{wan-wang} is elementary  except for using the fact that the linear combinations of delta
functions supported on quaternionic  hyperplanes are dense in the space of generalized functions. Here we give a completely elementary proof   only by using linear algebra. In Section 2, we prove   characterization and normalization of  real $2$-forms  in Theorem \ref{thm:real-form}, by which we can prove Theorem \ref{thm:det}.
Theorem \ref{thm:det'} is proved in Section 3. We also give a very simple proof of the fact that $-\frac{1}{\|q\|^2}$   is the fundamental solution of the quaternionic Monge-Amp\`{e}re operator. In Section 4, we give us a simplified proof of the invariance of $d_\alpha$ and $\triangle$ under quaternionic linear transformations.

\section{Real   forms and  the Moore determinant }
Denote by $\text{GL}_\mathbb{H}(n)$ the set of all invertible quaternionic $(n\times
n)$-matrices, and denote by $\text{U}_{\mathbb{H}}(n)$ the set of all unitary quaternionic $(n\times
n)$-matrices, i.e.,
  \begin{equation*}
     \text{U}_{\mathbb{H}}(n)=\{\mathcal{M}\in\text{GL}_\mathbb{H}(n), {\mathcal{M}}^*\mathcal{M}=\mathcal{M} {\mathcal{M}}^*=I_{n }\},
\end{equation*}
  where $ {\mathcal{M}}^*=\overline{\mathcal{M}}^t$, i.e. $( {\mathcal{M}}^*)_{jk}=\overline{\mathcal{M}_{kj}}$.  Here $\overline{q}=x_0-x_{ 1}\mathbf{i}-x_{ 2}\mathbf{j}-x_{ 3 }\mathbf{k}$ is the {\it quaternionic  conjugate} of $q=x_0+x_{ 1}\mathbf{i}+x_{ 2}\mathbf{j}+x_{ 3 }\mathbf{k}$ .

\begin{prop} \label{prop:diagonal} {\rm (Claim 1.1.4, 1.1.7  in \cite{alesker1})} For a hyperhermitian $(n\times n)$-matrix $\mathcal{M}$, there exists a unitary matrix $\mathcal{E}$ such that $\mathcal{E}^*\mathcal{M}\mathcal{E}$ is diagonal and real.
\end{prop}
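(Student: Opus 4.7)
The plan is to imitate the classical spectral theorem for complex Hermitian matrices, exploiting the reality of the quadratic form associated to a hyperhermitian matrix, and then inducting on $n$.

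First I would check that the form $q(v) := v^*\mathcal{M}v = \sum_{j,k}\overline{v_j}\mathcal{M}_{jk}v_k$ is real-valued on $\mathbb{H}^n$. Applying quaternionic conjugation term by term, using the order-reversal rule $\overline{pq}=\overline{q}\,\overline{p}$ together with the hyperhermitian identity $\overline{\mathcal{M}_{jk}}=\mathcal{M}_{kj}$, one obtains $\overline{q(v)}=q(v)$, so $q(v)\in\mathbb{R}$.

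Next I would extract a real eigenvalue by a variational argument: maximize $q$ over the compact unit sphere $\{v\in\mathbb{H}^n:v^*v=1\}$. If $v_1$ is a maximizer, then standard Lagrange-multiplier reasoning (the variations of $v^*v$ and $v^*\mathcal{M}v$ are governed by the same sesquilinear pairing) yields $\mathcal{M}v_1 = v_1\lambda_1$ with $\lambda_1 = q(v_1)\in\mathbb{R}$.

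The main step, and the one that requires care, is to show that the right-quaternionic orthogonal complement $v_1^{\perp}:=\{w\in\mathbb{H}^n:v_1^*w=0\}$ is $\mathcal{M}$-invariant. For $w\in v_1^{\perp}$, using $\mathcal{M}^*=\mathcal{M}$ and the reality of $\lambda_1$,
\begin{equation*}
v_1^*(\mathcal{M}w)=(\mathcal{M}v_1)^*w=(v_1\lambda_1)^*w=\overline{\lambda_1}\,v_1^*w=\lambda_1\cdot 0=0.
\end{equation*}
The reality of $\lambda_1$ is essential here: a general quaternionic scalar would not pull out cleanly from the adjoint, since quaternions do not commute with vector entries. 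This is the principal obstacle, and it is precisely why the variational step must produce a \emph{real} eigenvalue, not merely an eigenvector over $\mathbb{H}$.

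Finally I would induct on $n$: $v_1^{\perp}$ is itself a right $\mathbb{H}$-module (right multiplication by quaternions preserves the equation $v_1^*w=0$) and the restriction of $\mathcal{M}$ to it is again hyperhermitian, so the inductive hypothesis produces an orthonormal basis $(v_2,\dots,v_n)$ of eigenvectors with real eigenvalues. Assembling $(v_1,\ldots,v_n)$ as the columns of a matrix $\mathcal{E}$ gives $\mathcal{E}\in\mathrm{U}_{\mathbb{H}}(n)$ and $\mathcal{E}^*\mathcal{M}\mathcal{E}=\mathrm{diag}(\lambda_1,\ldots,\lambda_n)$, as required.
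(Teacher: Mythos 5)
The paper does not prove this proposition at all: it is quoted verbatim from Alesker (Claims 1.1.4 and 1.1.7 of the cited reference) and used as a black box, so there is no in-paper argument to compare against. Your proof is a correct, self-contained derivation, and it follows the standard variational route for the quaternionic spectral theorem. All the delicate quaternionic points are handled properly: the reality of $v^*\mathcal{M}v$ via $\overline{pq}=\overline{q}\,\overline{p}$ and $\overline{\mathcal{M}_{jk}}=\mathcal{M}_{kj}$; the eigenvalue appearing on the \emph{right} ($\mathcal{M}v_1=v_1\lambda_1$), which is the correct convention since $\mathcal{M}$ acts as a right-$\mathbb{H}$-linear map; and the observation that reality of $\lambda_1$ is what makes $v_1^{\perp}$ invariant. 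Two small points you could make fully explicit. First, the Lagrange step: for a real parameter $t$ and $v(t)=v_1+tw$ one gets $\tfrac{d}{dt}q(v(t))|_{t=0}=2\,\mathrm{Re}\bigl(w^*(\mathcal{M}v_1)\bigr)$ and $\tfrac{d}{dt}(v^*v)|_{t=0}=2\,\mathrm{Re}(w^*v_1)$, so criticality gives $\mathrm{Re}\bigl(w^*(\mathcal{M}v_1-v_1\lambda_1)\bigr)=0$ for all $w$; since $(u,w)\mapsto\mathrm{Re}(w^*u)$ is the nondegenerate Euclidean pairing on $\mathbb{H}^n\cong\mathbb{R}^{4n}$, this forces $\mathcal{M}v_1=v_1\lambda_1$ with the multiplier automatically real. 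Second, for the induction one should note that $v_1^{\perp}$ is a free right $\mathbb{H}$-module of rank $n-1$ (as $\mathbb{H}$ is a division ring) and that the matrix $\mathcal{N}_{jk}=w_j^*\mathcal{M}w_k$ of the restriction in any orthonormal basis is again hyperhermitian. An alternative route, more in the spirit of this paper's machinery, would be to pass to $\tau(\mathcal{M})$, which is a complex Hermitian $2n\times 2n$ matrix anticommuting appropriately with the quaternionic structure $v\mapsto J\overline{v}$, apply the complex spectral theorem, and observe that eigenspaces come in $\mathbf{j}$-paired doublets; your argument avoids that bookkeeping entirely.
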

\begin{prop} \label{prop:Moore} {\rm (Theorem 1.1.9 in \cite{alesker1})} (1) The Moore determinant of  any complex hermitian  matrix   considered as a quaternionic hyperhermitian matrix  is equal to its usual determinant.

(2) For any quaternionic hyperhermitian $(n\times n)$-matrix $\mathcal{M}$ and any quaternionic $(n\times n)$-matrix $\mathcal{C}$
\begin{equation*}
     \det(\mathcal{C}^*\mathcal{M}\mathcal{C})=\det(\mathcal{ A} )\det(\mathcal{C}^* \mathcal{C}).
\end{equation*}
\end{prop}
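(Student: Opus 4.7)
The plan is to prove (2) first, using Theorem \ref{thm:det} to reinterpret the Moore determinant as the coefficient of a top exterior product of $2$-forms, and then to deduce (1) from (2) together with Proposition \ref{prop:diagonal}.

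For (2), I would begin with three elementary algebraic identities, each checked directly from the definition (\ref{eq:tau-A}): that $\tau$ is a ring homomorphism, that $\tau(\mathcal{C}^*) = \tau(\mathcal{C})^*$, and the pivotal identity
\[
\tau(\mathcal{C})^* \;=\; J^{-1}\,\tau(\mathcal{C})^t\, J.
\]
Combining them should yield
\[
\tau(\mathcal{C}^*\mathcal{M}\mathcal{C})\,J \;=\; \widehat{\mathcal{C}}^{t}\bigl(\tau(\mathcal{M})\,J\bigr)\,\widehat{\mathcal{C}},
\qquad \widehat{\mathcal{C}} := J^{-1}\tau(\mathcal{C})J,
\]
which is exactly the transformation law of the coefficient matrix of the real $2$-form $\omega_{\mathcal{M}}$ under the linear change of basis $\widetilde{\omega}^A := \sum_B \widehat{\mathcal{C}}_{AB}\,\omega^B$. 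The upshot is that the $2$-form associated with $\mathcal{C}^*\mathcal{M}\mathcal{C}$ in the basis $\{\omega^A\}$ agrees with the $2$-form associated with $\mathcal{M}$ in the basis $\{\widetilde{\omega}^A\}$. Taking $n$-th exterior powers and applying Theorem \ref{thm:det} to both sides gives
\[
2^n n!\,\det(\mathcal{C}^*\mathcal{M}\mathcal{C})\,\Omega_{2n} \;=\; 2^n n!\,\det(\mathcal{M})\,\widetilde{\Omega}_{2n},
\]
where $\widetilde{\Omega}_{2n}$ is the analogue of $\Omega_{2n}$ built from $\{\widetilde{\omega}^A\}$. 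Since both are top forms on $\mathbb{C}^{2n}$, the ratio $\widetilde{\Omega}_{2n}/\Omega_{2n}$ equals the complex determinant of the change-of-basis matrix $\widehat{\mathcal{C}}$, which coincides with $\det_{\mathbb{C}}(\tau(\mathcal{C}))$ by similarity. This produces the intermediate identity $\det(\mathcal{C}^*\mathcal{M}\mathcal{C}) = \det(\mathcal{M})\cdot\det_{\mathbb{C}}(\tau(\mathcal{C}))$. Specializing to $\mathcal{M}=I_n$ gives $\det(\mathcal{C}^*\mathcal{C}) = \det_{\mathbb{C}}(\tau(\mathcal{C}))$, and substituting back produces (2).

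For (1), I would take $H$ to be a complex hermitian matrix, regarded as quaternionic hyperhermitian. Classical linear algebra provides a complex unitary $U$ with $U^*HU = D$ real diagonal; because $\mathrm{U}(n)\subset \mathrm{U}_{\mathbb{H}}(n)$, this same $U$ realizes the diagonalization promised by Proposition \ref{prop:diagonal}. The Moore determinant of a real diagonal matrix is the product of its diagonal entries, so $\det(D) = \prod_{i} d_{ii}$. Applying (2) with $(\mathcal{M},\mathcal{C}) = (H, U)$, and using $\det(U^*U)=\det(I_n)=1$, gives $\det(H) = \det(D) = \prod_i d_{ii}$. On the other hand $\det_{\mathbb{C}}(H) = |\det_{\mathbb{C}}(U)|^{2}\det_{\mathbb{C}}(D) = \prod_i d_{ii}$, so the two determinants coincide.

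The main obstacle will be establishing the pivotal identity $\tau(\mathcal{C})^*=J^{-1}\tau(\mathcal{C})^t J$ and carefully keeping track of the two bases $\{\omega^A\}$ and $\{\widetilde{\omega}^A\}$, together with the Jacobian factor $\det_{\mathbb{C}}(\tau(\mathcal{C}))$ generated by the change of top form. Once this bookkeeping is in place, both parts follow essentially for free from Theorem \ref{thm:det} and Proposition \ref{prop:diagonal}.
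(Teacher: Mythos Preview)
The paper does not prove this proposition at all: it is quoted from \cite{alesker1} as a known input, and is then \emph{used} in the proof of Theorem~\ref{thm:det}. Indeed, the step $\det(-b+a\mathbf{j})=\nu_0\cdots\nu_{n-1}$ in (\ref{eq:det-nu}) is obtained precisely by applying part~(2) of this proposition to the unitary diagonalization (\ref{eq:-b+aj}).

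Your proposal therefore has a genuine circularity: you plan to deduce Proposition~\ref{prop:Moore} from Theorem~\ref{thm:det}, but in this paper Theorem~\ref{thm:det} is proved \emph{using} Proposition~\ref{prop:Moore}. Unless you supply an independent proof of Theorem~\ref{thm:det} that avoids the Moore-determinant multiplicativity (for instance, by computing $\triangle_n(\tau(\mathcal{M})J,\ldots)$ directly from a combinatorial or Pfaffian identity and comparing with an independent definition of the Moore determinant), the argument collapses. Your algebraic identities $\tau(\mathcal{C})^*=J^{-1}\tau(\mathcal{C})^tJ$ and $\tau(\mathcal{C}^*\mathcal{M}\mathcal{C})J=\widehat{\mathcal{C}}^t(\tau(\mathcal{M})J)\widehat{\mathcal{C}}$ are correct and would be a clean route if you had Theorem~\ref{thm:det} available on independent grounds; as written, however, the logical dependency runs the wrong way.
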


\begin{prop}\label{prop:tau} $(1)$  For a quaternionic $(p\times m)$-matrix $\mathcal{M}$ and a quaternionic $(m\times l)$-matrix $\mathcal{N}$, we have
 \begin{equation}\label{eq:rep}
    \tau(\mathcal{MN})=\tau(\mathcal{M})\tau(\mathcal{N}).
 \end{equation}

$(2)$ For $M\in M_\mathbb{C}(2n,2n)$, there exists  $\mathcal{M}\in M_\mathbb{H}(n,n)$ such that $M=\tau(\mathcal{M})$ if and only if \begin{equation}\label{eq:AJ}J\overline{  M }= M J.\end{equation}
$(3)$  $\tau\left( {\mathcal{M}}^*\right)=\overline{\tau(\mathcal{M})}^t$ for any quaternionic $(n\times n)$-matrix $\mathcal{M}$.   $\mathcal{M}\in\text{U}_{\mathbb{H}}(n)$ if and only if  $\tau(\mathcal{M})$ is  complex unitay and symplectic, i.e.
\begin{equation}\label{eq:symplectic}
   \tau(\mathcal{M})J\tau(\mathcal{M})^t=J.
\end{equation}
\end{prop}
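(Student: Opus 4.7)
The plan is to prove the three parts in order, with each subsequent part leaning on the previous ones. Throughout I would work with the decomposition $\mathcal{M}=a+b\mathbf{j}$ for $a,b\in M_{\mathbb{C}}(p,m)$ and the commutation rule $\mathbf{j}c=\overline{c}\mathbf{j}$ for any complex scalar or complex matrix $c$; everything else is block-matrix bookkeeping.

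For part (1), expanding $\mathcal{MN}=(a+b\mathbf{j})(c+d\mathbf{j})$ via $\mathbf{j}c=\overline{c}\mathbf{j}$ and $\mathbf{j}^2=-1$ gives $\mathcal{MN}=(ac-b\overline{d})+(ad+b\overline{c})\mathbf{j}$. Substituting into (\ref{eq:tau-A}) and multiplying out the two $2\times 2$ block matrices $\tau(\mathcal{M})\tau(\mathcal{N})$, the four blocks match after noting $\overline{ac-b\overline{d}}=\overline{a}\,\overline{c}-\overline{b}\,d$ and similarly for the $(2,2)$-block. For part (2), the $\Rightarrow$ direction is a one-line block computation in which both $J\overline{\tau(\mathcal{M})}$ and $\tau(\mathcal{M})J$ collapse to the same block matrix. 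For the converse, I would split $M$ into four $n\times n$ blocks $\left(\begin{smallmatrix}P&Q\\R&S\end{smallmatrix}\right)$; the equation $J\overline{M}=MJ$ produces four block equations that force $S=\overline{P}$ and $R=-\overline{Q}$, so setting $a:=P$ and $b:=-Q$ exhibits $M=\tau(a+b\mathbf{j})$.

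For part (3), the identity $\tau(\mathcal{M}^*)=\overline{\tau(\mathcal{M})}^t$ follows by writing $\mathcal{M}^*=\overline{a}^t-b^t\mathbf{j}$ (from the entrywise rule $\overline{z+w\mathbf{j}}=\overline{z}-w\mathbf{j}$ for $z,w\in\mathbb{C}$) and matching blocks against (\ref{eq:tau-A}). The unitary characterization then combines (1) and (2): applying $\tau$ to $\mathcal{M}^*\mathcal{M}=I$ and using (1) together with the conjugate-transpose identity yields $\overline{\tau(\mathcal{M})}^t\tau(\mathcal{M})=I$, so $\tau(\mathcal{M})$ is complex unitary; the symplectic identity then falls out because $\tau(\mathcal{M})$ automatically satisfies $J\overline{\tau(\mathcal{M})}=\tau(\mathcal{M})J$ by (2), and transposing this relation and combining with unitarity and $J^t=-J$, $J^2=-I$ rearranges into $\tau(\mathcal{M})J\tau(\mathcal{M})^t=J$. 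The converse reverses this chain, using the injectivity of $\tau$ to recover $\mathcal{M}^*\mathcal{M}=I$ from $\tau(\mathcal{M}^*\mathcal{M})=I$.

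Nothing here is deep; the only real pitfall is notational, namely keeping complex conjugation and quaternionic conjugation straight (they agree on the $\mathbb{C}$-part of an entry and flip sign on the $\mathbf{j}$-part) and tracking signs when sliding $\mathbf{j}$ past complex matrix entries. Once these conventions are fixed, each part reduces to a short block-matrix calculation.
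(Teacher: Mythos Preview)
Your proposal is correct and follows essentially the same route as the paper: the same decomposition $\mathcal{M}=a+b\mathbf{j}$, the same use of $\mathbf{j}c=\overline{c}\mathbf{j}$ to expand $\mathcal{MN}$, the same block analysis of $J\overline{M}=MJ$, and the same derivation of symplecticity by combining (\ref{eq:AJ}) with complex unitarity. The paper is slightly terser in part~(3), simply citing (\ref{eq:A-bar'}) and (\ref{eq:AJ}) for symplecticity and asserting the converse, but your spelled-out version (transpose, then use $\overline{\tau(\mathcal{M})}^t=\tau(\mathcal{M})^{-1}$, then injectivity of $\tau$ for the converse) is exactly what is meant.
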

\begin{proof} Write $\mathcal{M}=a+b\mathbf{j}$ as in (\ref{eq:tau-A}).
If we write the quaternionic $(m\times l)$-matrix  $\mathcal{N}=c+d\mathbf{j}$ for some    complex matrices $c, d\in M_{\mathbb{C}}( m,l)$, we have
\begin{equation}\label{eq:AB}\mathcal{MN}=
 (a+b\mathbf{j})(  c+d\mathbf{j})= ac-b\overline{d}+(ad+b\overline{c})\mathbf{j},
\end{equation}by
\begin{equation}\label{eq:dj-jd}
 \mathbf{j}  d=\overline{d}\mathbf{j}.
\end{equation}
Then (\ref{eq:AB})
implies
\begin{equation*}
   {\tau}(\mathcal{MN})= \left(
                                       \begin{array}{rr}ac-b\overline{d}
  &-  ad-b\overline{c} \\
                                       \overline  {ad}+\overline b{c} & \overline{ac}-\overline b{d} \\
                                       \end{array} \right)= \left(
                                       \begin{array}{rr}
{a} &-{ b} \\
                                       \overline   {b} & \overline{a} \\
                                       \end{array} \right)  \left(
                                       \begin{array}{rr}
{ c} &-{ d} \\
                                     \overline   {d} & \overline{c} \\
                                       \end{array} \right)= {\tau}(\mathcal{M}) {\tau}(\mathcal{N}),
\end{equation*} by definition of $\tau$.

(2) If we write
\begin{equation*}
  M=\left(
                                       \begin{array}{rr}
                                         a & b \\
                                        c&d\\
                                       \end{array} \right),
\end{equation*} where $a,b,c,d$ are $n\times n$ complex matrices, then
(\ref{eq:AJ}) is equivalent to $c=-\overline{b}, d=\overline{a}$, i.e.
\begin{equation*}
  M=\left(
                                       \begin{array}{rr}
                                         a & b \\
                                       -\overline{b}&\overline{a}\\
                                       \end{array} \right).
\end{equation*}Thus $M=\tau(\mathcal{M})$ with $\mathcal{M}=a-b\mathbf{j}$. The converse is also true.

(3) By definition, the quaternionic  conjugate of $ {\mathcal{M}} =a+b\mathbf{j}$  is $\overline{\mathcal{M}} = \overline{ a}-{b} \mathbf{j}.$ Then we have
\begin{equation}\label{eq:A-bar}
\overline{\mathcal{M}}^t= \overline{ a}^t-{b}^t\mathbf{j},
\end{equation}
 and so
\begin{equation}\label{eq:A-bar'}
   {\tau}(\mathcal{M }^*)= \left(
                                       \begin{array}{rr}
\overline{{a}}^t & { b}^t \\
                                     -  \overline   {b}^t &  {a}^t \\
                                       \end{array} \right)=\overline{{\tau}(\mathcal{M })}^t .
\end{equation}
${\tau}(\mathcal{M })$  is unitary by $\overline{{\tau}(\mathcal{M }) }^t{\tau}(\mathcal{M })= {\tau}(\mathcal{M }^*\mathcal{M })=I_{2n}$.
The symplecticity (\ref{eq:symplectic}) of ${\tau}(\mathcal{M })$ follows from (\ref{eq:A-bar'}) and (\ref{eq:AJ}).
The converse is also true.
\end{proof}

\begin{rem}\label{rem:relabel}
This Proposition is essentially a conjugate and relabelling version of proposition 2.1 in \cite{wan-wang}, where   $J$ equals to
\begin{equation*}  \left(
                                       \begin{array}{ccccccc}
                                       0 & 1 &   &   &   \\
                                         -1 & 0 &   &   &   \\
                                           &   & 0 & 1 &   \\
                                           &   & -1 & 0 &   \\
                                           &   &   &   & \ddots \\
                                           & & &&&0&1\\
                                           & & &&&-1&0\\\end{array}
                                     \right),
\end{equation*}
and the embedding $\tau$ maps  a quaternionic $(l\times m)$-matrix $\mathcal{A}=(\mathcal{A}_{jk})_{l\times m}$   to a complex $(2l\times 2m)$-matrices \begin{equation*} \tau(\mathcal{A})=\left(
                                                              \begin{array}{ccc}
                                                                \tau(\mathcal{A}_{00}) & \tau(\mathcal{A}_{01}) & \cdots   \\
                                                                \tau(\mathcal{A}_{10}) & \tau(\mathcal{A}_{11}) & \cdots  \\
                                                                \cdots & \cdots & \ddots   \\
                                                              \end{array}
                                                            \right).
\end{equation*}
Here $\tau(\mathcal{A}_{jk})$ is the complex $(2\times2)$-matrix \begin{equation*} \left(
                                       \begin{array}{rr}
                                         a_{jk}^0-\textbf{i}a_{jk}^1 & -a_{jk}^2+\textbf{i}a_{jk}^3 \\
                                         a_{jk}^2+\textbf{i}a_{jk}^3 & a_{jk}^0+\textbf{i}a_{jk}^1 \\
                                       \end{array}
                                     \right) \end{equation*}
if we write $
    \mathcal{A}_{jk}=a_{jk}^0+\textbf{i}a_{jk}^1+\textbf{j}a_{jk}^2+\textbf{k}a_{jk}^3\in\mathbb{H}.
 $ One advantage of using $\tau$ as in (\ref{eq:tau-A}) is that the proof of Proposition \ref{prop:tau} becomes more transparent.
 \end{rem}

For $ M\in M_{\mathbb{C}}(2n,2n )$, define its {\it $\mathbb{C}$-linear  action} on $\mathbb{C}^{2n}$ as:
\begin{equation}\label{eq:A.}M.\omega^A=\sum_{B=0}^{2n-1}M_{AB}\, \omega^B,\end{equation}and define the  induced action on $\wedge^{2k}\mathbb{C}^{2n}$ as
 \begin{equation*}
 {M}.\left(\omega^{A_1}\wedge \ldots\wedge\omega^{A_{2k}}\right)= M.\omega^{A_1}\wedge \ldots\wedge{M}.\omega^{A_{2k}} .
 \end{equation*}
 For $\mathcal{M}\in M_{\mathbb{H}}( n, n )$, define its \emph{induced $\mathbb{C}$-linear  action}  on $\mathbb{C}^{2n}$ as
\begin{equation}\label{eq:A.}\mathcal{M}.\omega^A= \tau(\mathcal{M}). \omega^A,\end{equation}and  so on $\wedge^{2k}\mathbb{C}^{2n}$.

  Note that for $\mathcal{M}\in\text{U}_{\mathbb{H}}(n)$,
$\mathcal{M}.\beta_n  =\beta_n
$    for $\beta_n$  given by
\begin{equation}\label{eq:beta-d}
    \beta_n:=\sum_{l=0}^{n-1}  \omega^l\wedge\omega^{n+l},
\end{equation}
since $\tau(\mathcal{M})$
is symplectic by Proposition \ref{prop:tau} (3) and so
\begin{equation}\label{eq:beta-inv}\begin{split}\mathcal{M}.\beta_n &=\sum_{ l=0}^{n-1} \sum_{A,B} \tau( \mathcal{M})_{lA} \tau( \mathcal{M} )_{(n+l)B}\,{\omega}^A\wedge {\omega}^{B}\\&=\frac 12\sum_{A,B} \left[\tau( \mathcal{M} )^tJ \tau( \mathcal{M} )\right]_{AB}\,{\omega}^A\wedge {\omega}^{B}=
   \sum_{ l=0}^{n-1}  {\omega}^l\wedge {\omega}^{l+n}.
\end{split}\end{equation}
  Consequently $\mathcal{M}.(\wedge^n
\beta_n)=\wedge^n \beta_n$, i.e.,
\begin{equation}\label{eq:A-omega}\mathcal{M}.\Omega_{2n}=\Omega_{2n},\end{equation}where $\Omega_{2n}$ is given by (\ref{eq:omega}). This means that $\beta_n $ and $\Omega_{2n}$ are invariant under
unitary transformations on $\mathbb{H}^n$.

Recall that $\mathbf{j}$
  induces a real linear map \cite{wan-wang}
\begin{equation}\label{eq:rouj}\rho(\textbf{j}):\mathbb{C}^{2n}\rightarrow\mathbb{C}^{2n},
\qquad\rho(\textbf{j})(z\omega^k)=\overline{z}J.\omega^k.
\end{equation}
$\mathbf{j}$ is not $\mathbb{C}$-linear, but it is anti-complex linear. It induces an   action on $\wedge^{ k}\mathbb{C}^{2n}$.
$\wedge^{2k}_{\mathbb{R}}\mathbb{C}^{2n}$ is the subspace of all real elements in $\wedge^{2k}\mathbb{C}^{2n}$. $\beta_n$ above is real,  and so is $\Omega_{2n}$.

 A quaternionic  $(m\times k)$-matrix $\mathcal{M}\in M_{\mathbb{H}}( m,k)$
defines a
map $\eta:\mathbb{H}^{k}\rightarrow \mathbb{H}^{m}$  given by
\begin{equation*}
   \eta(q_1,\dots q_k)=\left(\cdots, \sum_{l=1}^{k} \mathcal{M}_{jl}q_l,\cdots\right),
\end{equation*} which  is right $\mathbb{H}$-linear, i.e. $\eta(q_1q,\dots q_kq)=\eta(q_1,\dots q_k)q$ for any $q\in \mathbb{H}$.
It induces a $\mathbb{C}$-linear map $\tau(\mathcal{M}):\mathbb{C}^{2k}\rightarrow\mathbb{C}^{2m}$.     Similar to (\ref{eq:A.}), the induced $\mathbb{C}$-linear pulling back transformation   $\eta^*:\mathbb{C}^{2m}\rightarrow\mathbb{C}^{2k}$ is defined as:
\begin{equation}\label{g^*}
\eta^*\widetilde{\omega}^p=\sum_{j=0}^{2k-1}\tau(\mathcal{M})_{pj}\omega^j,\qquad p=0,\ldots,2m-1,
\end{equation}
where $\{\widetilde{\omega}^0,\ldots,\widetilde{\omega}^{2m-1}\}$ is a basis of $\mathbb{C}^{2m}$ and $\{\omega^0,\ldots,\omega^{2k-1}\}$ is a basis of $\mathbb{C}^{2k}$. It  induces a $\mathbb{C}$-linear pulling back transformation  on $\wedge^{2k}\mathbb{C}^{2m}$   given by
$
 \eta^*(\alpha\wedge\beta)=\eta^*\alpha\wedge \eta^*\beta
$ inductively.

An element $\omega\in\wedge_{\mathbb{R}}^{2k}\mathbb{C}^{2n}$ is said to be \emph{elementary strongly positive} if there exist linearly independent right $\mathbb{H}$-linear mappings $\eta_j:\mathbb{H}^n\rightarrow \mathbb{H}$ , $j=1,\ldots,k$, so that \begin{equation}\label{eq:omega'}\omega=\eta_1^*\widetilde{\omega}^0\wedge \eta_1^*\widetilde{\omega}^1\wedge\ldots\wedge\eta_k^*\widetilde{\omega}^0\wedge \eta_k^*\widetilde{\omega}^1,\end{equation}where $\{\widetilde{\omega}^0,\widetilde{\omega}^1\}$ is a basis of $\mathbb{C}^{2}$. The following corollary easily follows from the definition.

\begin{cor}
   $\omega\in\wedge_{\mathbb{R}}^{2k}\mathbb{C}^{2n}$ is  elementary strongly positive  if and only if
\begin{equation}\label{eq:elementary-strongly-positive}
  \omega=\mathcal{M}.(\omega^0\wedge\omega^n\wedge\ldots\wedge\omega^{ k-1}\wedge\omega^{ n+k-1})
\end{equation}for some quaternionic matrix $\mathcal{M}\in M_{\mathbb{H}}(n)$ with first $k$ rows linearly independent.
\end{cor}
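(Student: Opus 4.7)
The plan is to translate both sides of the equivalence through the embedding $\tau$ and its block structure in (\ref{eq:tau-A}). First I would observe that for a quaternionic row vector $\eta = a + b\mathbf{j} \in M_{\mathbb{H}}(1,n)$, the complex matrix $\tau(\eta) \in M_{\mathbb{C}}(2, 2n)$ has row $0$ equal to $(a, -b)$ and row $1$ equal to $(\overline{b}, \overline{a})$. The same block pattern shows that if $\mathcal{M} \in M_{\mathbb{H}}(n,n)$ is any matrix whose $(j-1)$-th row equals $\eta_j$, then rows $j-1$ and $n+j-1$ of $\tau(\mathcal{M})$ coincide with rows $0$ and $1$ of $\tau(\eta_j)$, respectively. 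Combining (\ref{g^*}) and (\ref{eq:A.}) then yields
\begin{equation*}
\eta_j^*\widetilde{\omega}^0 = \mathcal{M}.\omega^{j-1}, \qquad \eta_j^*\widetilde{\omega}^1 = \mathcal{M}.\omega^{n+j-1}.
\end{equation*}

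Wedging over $j = 1, \ldots, k$ and using the induced $\mathbb{C}$-linear action on $\wedge^{2k}\mathbb{C}^{2n}$, the right-hand side of (\ref{eq:omega'}) becomes $\mathcal{M}.(\omega^0 \wedge \omega^n \wedge \cdots \wedge \omega^{k-1} \wedge \omega^{n+k-1})$, matching (\ref{eq:elementary-strongly-positive}). For the forward implication I would extend the $k \times n$ quaternionic matrix whose rows are $\eta_1, \ldots, \eta_k$ to any $n \times n$ matrix $\mathcal{M}$ (for instance by adjoining zero rows); its first $k$ rows are then the $\eta_j$, hence linearly independent. For the converse I would read off $\eta_j$ as the $(j-1)$-th row of the given $\mathcal{M}$ and use the fact that linear independence of these quaternionic row vectors is equivalent to linear independence of the corresponding right $\mathbb{H}$-linear maps $\mathbb{H}^n \to \mathbb{H}$.

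No conceptual obstacle arises — the content is a bookkeeping exercise in the index shift built into $\tau$. The only point deserving care is that a single quaternionic row of $\mathcal{M}$ contributes two complex rows of $\tau(\mathcal{M})$, separated by the offset $n$; this is precisely why the canonical $2k$-form in (\ref{eq:elementary-strongly-positive}) pairs index $l$ with $n+l$ rather than with $l+1$, and it also explains why the extension of the $k$ rows to an $n \times n$ matrix is harmless (the additional rows never enter the action on the $\omega^l, \omega^{n+l}$ with $l < k$).
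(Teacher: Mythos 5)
Your proposal is correct and is exactly the definition-unwinding the paper has in mind when it says the corollary "easily follows from the definition": identifying each $\eta_j$ with the $(j-1)$-th quaternionic row of $\mathcal{M}$, so that $\eta_j^*\widetilde{\omega}^0=\mathcal{M}.\omega^{j-1}$ and $\eta_j^*\widetilde{\omega}^1=\mathcal{M}.\omega^{n+j-1}$ via the block structure of $\tau$ in (\ref{eq:tau-A}). The paper omits the computation entirely, and your bookkeeping (including the harmless extension of the $k$ rows to an $n\times n$ matrix) supplies precisely the intended argument.
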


An element $\omega\in\wedge_{\mathbb{R}}^{2k}\mathbb{C}^{2n}$ is called \emph{strongly positive} if it belongs to the convex cone $\text{SP}^{2k}\mathbb{C}^{2n}$ in $\wedge_{\mathbb{R}}^{2k}\mathbb{C}^{2n}$ generated by elementary strongly positive $2k$-elements. An $2k$-element $\omega$ is said to be \emph{positive} if for any elementary strongly positive element $\eta\in \text{SP}^{2n-2k}\mathbb{C}^{2n}$, $\omega\wedge\eta$ is strongly positive.
By definition, $ \beta_n$ is a strongly  positive
$2$-form, and
$ \beta_n^n=\wedge^n\beta_n=n!~\Omega_{2n}$ is a positive
$2n$-form.

\vskip 5mm
 {\it Proof of Theorem \ref{thm:real-form}}. (1) For $ \omega$ given by (\ref{eq:real-2form}), note that
\begin{equation*}
  \rho(\textbf{j})\omega= \sum_{A,B=0}^{2n-1}\overline{M_{AB}} J.\omega^A\wedge J.\omega^B    =\sum_{C,D=0}^{2n-1} (J^t \overline{M }J)_{CD} \, \omega^C\wedge  \omega^D
\end{equation*}by definition of $\rho(\textbf{j})$.
So $\omega$ is a real $2$-form if and only if  $J^t \overline{M }J=M$, i.e.
\begin{equation*}\label{eq:AJ'}
   J\overline{M }= M J ,
\end{equation*}by $J^2=-I_{2n}$ and $J^t=-J$.
Then by Proposition \ref{prop:tau}  (2), we can write\begin{equation*}
 M=\left(
                                       \begin{array}{rr}
                                         a & -b \\
\overline{b}&\overline{a}\\
                                       \end{array} \right),
\end{equation*}for some    complex matrices $a,b\in M_{\mathbb{C}}( n,n)$, and so
\begin{equation}\label{eq:A-tau}
  M=\left(
                                       \begin{array}{rr}
                                        - b & - a\\
                                        \overline{a}&-\overline{b}\\
                                       \end{array} \right)J=\tau(-b +a\mathbf{j})J,
\end{equation}
where $\mathcal{M}:=-b+a\mathbf{j}$ is hyperhermitian. This is   because
  skew-symmetry of $M$ is equivalent to
\begin{equation}\label{eq:quaternioncity}
   b^t=\overline{b},  \qquad a^t=-a,
\end{equation}
and so
\begin{equation*}
  \left ( \overline{-b+ a\mathbf{j}} \right)^t =\left(-\overline{b}- { a }\mathbf{j}\right )^t =-b+ a\mathbf{j},
\end{equation*}
where the bar in the left hand side is the quaternionic  conjugate.

(2) By applying Proposition \ref{prop:diagonal} (1) to the hyperhermitian matrix $\mathcal{M}  $,  we can find  a quaternionic unitary matrix $\mathcal{B}=e+f\mathbf{j}\in U_{\mathbb{H}}(n)$ so that
\begin{equation}\label{eq:-b+aj}
   \overline{\mathcal{B}}^t( -b +a\mathbf{j})\mathcal{B}={\rm diag} (\nu_0,\ldots,\nu_{n-1})
\end{equation} for some real numbers $\nu_0,\ldots,\nu_{n-1}$.
Multiply $-\mathbf{j}$ from right on both sides to get
\begin{equation}\label{eq:-b+aj'}
  ( \overline{e}^t-{f}^t\mathbf{j}) (a+ b \mathbf{j} )  (\overline e+\overline{f }\mathbf{j})= {\rm diag}  (-\nu_1\mathbf{j},\ldots,-\nu_n\mathbf{j}),
\end{equation}by
\begin{equation}\label{eq:switch-j}
   ( e+f \mathbf{j})\mathbf{j}=\mathbf{j}(\overline e+\overline{f }\mathbf{j})\qquad {\rm  and }\qquad  -(- b +a \mathbf{j} )\mathbf{j}= a+ b \mathbf{j}
\end{equation}
 by using (\ref{eq:dj-jd}).
Apply  the mapping $ {\tau}$  as a representation (\ref{eq:rep})  on both sides of (\ref{eq:-b+aj'}) to get
\begin{equation}\label{eq:matrix-b+aj} \left(
    \begin{array}{rr}
                                         \overline e &-\overline{ f }\\
                                        {f} & {e} \\
                                       \end{array} \right)^t\left(
    \begin{array}{rr}
                                         a & -b \\
\overline{b} &\overline{a} \\
                                       \end{array} \right)\left(
    \begin{array}{rr}
                                         \overline e & -\overline{ f }\\
                                       {f} & {e} \\
                                       \end{array} \right)=\left(
    \begin{array}{cc}
                                      {\Large   0} &
    \begin{array}{ccc}\nu_0& &\\&\ddots&
                                         \\&  & \nu_{n-1} \\
                                                                               \end{array}  \\
                                            \begin{array}{ccc}-\nu_0& &\\&\ddots&
                                         \\&  & -\nu_{n-1} \\
                                                                               \end{array}   &0 \\
                                       \end{array} \right).
\end{equation}

Now set $\mathcal{E}:=\overline e+\overline{f }\mathbf{j}$. The left hand side of (\ref{eq:matrix-b+aj}) is exactly $\tau(\mathcal{E})^tM\tau(\mathcal{E})$.
We see that $\mathcal{E}$ is unitary since
\begin{equation*}
  \overline e+\overline{f }\mathbf{j} = - \mathbf{j}(  e+ {f }\mathbf{j})\mathbf{j},
\end{equation*}
  and    $ e+ {f }\mathbf{j}$ and $\mathbf{j}$ are both unitary. So (\ref{eq:normal-matrix}) holds. At last
we find that
\begin{equation*}
   {\omega}^A =\tau(\mathcal{E}).\widetilde{\omega}^A
\end{equation*}
by $\tau(\mathcal{E}).\tau(\mathcal{E}^*).{\omega}^A=\tau(\mathcal{E} \mathcal{E}^*).{\omega}^A={\omega}^A$ and
  \begin{equation*}
  \begin{split}
    \omega& =\sum_{A,B=0}^{2n-1}M_{AB}   \tau(\mathcal{E}).\widetilde{\omega}^A  \wedge \tau(\mathcal{E}).\widetilde{\omega}^B =\sum_{A,B,C,D} M_{AB}\tau(\mathcal{E})_{AC}\widetilde{\omega}^C\wedge \tau(\mathcal{E})_{BD}\widetilde{\omega}^D\\&=\sum_{ C,D}\left (\tau(\mathcal{E})^t M\tau(\mathcal{E})\right)_{CD}\widetilde{\omega}^C\wedge  \widetilde{\omega}^D,
    \end{split}\end{equation*}
from which we get (\ref{eq:normal-form}). \hskip 112mm  $\Box$
       \vskip 5mm

{\it Proof of Theorem \ref{thm:det}}.  In order to use   results in the proof of Theorem \ref{thm:real-form}, we use notations there. Let $\mathcal{M} =-b+a\mathbf{j}$ be  a
  hyperhermitian matrix as above, and let $\omega=\sum_{A,B=0}^{2n-1} M_{ AB}\,\omega^A\wedge\omega^B$ be the $2$-element, where $
   M=(M_{ AB})=\tau(\mathcal{M})J$. Let $\mathcal{B}=   e +f\mathbf{j}$ be the unitary matrix such that (\ref{eq:-b+aj}) holds. Then
  (\ref{eq:normal-form}) holds
    with $\mathcal{E}:=\overline e+\overline{f }\mathbf{j}$ and $\widetilde {\omega}^A= \mathcal{E^*}.{\omega}^A$.

Now apply Propositions \ref{prop:Moore} (2) to determinants of both sides of (\ref{eq:-b+aj}) to get
\begin{equation}\label{eq:det-nu}
   \det( - b +a\mathbf{j})=\nu_0\cdots \nu_{n-1}.
\end{equation}   The $n$-th exterior products of both sides of   (\ref{eq:normal-form}) give  us
\begin{equation}\label{eq:wedge-nu}
   \omega \wedge\cdots\wedge\omega =2^nn! \nu_0\cdots \nu_{n-1} \widetilde{\omega}^0\wedge\widetilde{\omega}^{ n }\wedge\cdots\wedge\widetilde{\omega}^{ n-1}\wedge\widetilde{\omega}^{2n-1}.
\end{equation}
We claim that
\begin{equation}\label{eq:Omega-omega}
\widetilde{\omega}^0\wedge\widetilde{\omega}^{  n }\cdots\wedge
\widetilde{\omega}^{ n-1} \wedge
\widetilde{\omega}^{2n-1}=\Omega_{2n}  .
\end{equation}
This is because    $ \tau( \mathcal{E}^* )$ is  symplectic by Proposition \ref{prop:tau} (3), since $\mathcal{E} =\overline e+\overline{f }\mathbf{j}$ is  quaternionic unitary. So $\tau(\mathcal{E}^*).\beta_n=\beta_n$, i.e.
\begin{equation*} \sum_{ l=0}^{n-1} \widetilde{\omega}^l\wedge\widetilde{\omega}^{l+n}  =
   \sum_{ l=0}^{n-1}  {\omega}^l\wedge {\omega}^{l+n},
 \end{equation*} as in (\ref{eq:beta-inv}),
 whose $n$-th exterior products  give us (\ref{eq:Omega-omega}).

Apply (\ref{eq:Omega-omega}) to (\ref{eq:wedge-nu})
to get
\begin{equation*}\triangle_n\left(M,\cdots,M\right)=
2^n n!  \det( - b +a\mathbf{j}).
\end{equation*}
by definition of $\triangle_n$ in (\ref{eq:triangleup-n}). Namely for the $2$-form
\begin{equation*}
   \omega_{\mathcal{M}}:=\sum_{A,B} [\tau(\mathcal{M})J]_{AB} \omega^A\wedge \omega^B,
\end{equation*} associated to a
hyperhermitian $n\times n$-matrix $\mathcal{M}= - b +a\mathbf{j}$, we have
\begin{equation}\label{eq:det-1}
   \omega_{\mathcal{M} } \wedge\cdots\wedge  \omega_{\mathcal{M} } =2^nn!\det (   \mathcal{M}   )\Omega_{2n}.
\end{equation}

Applying  (\ref{eq:det-1}) to  $\mathcal{M}=\lambda_1 {\mathcal{M}_1}+\cdots+\lambda_n {\mathcal{M}_n}$ for real numbers $\lambda_1 ,\cdots,\lambda_n$, which is also hyperhermitian since each $\mathcal{M}_j$ is, we get
\begin{equation*}
   (\lambda_1\omega_{\mathcal{M}_1}+\cdots )\wedge\cdots\wedge (\lambda_1\omega_{\mathcal{M}_1}+\cdots )=2^nn!\det\left (\lambda_1 \tau({\mathcal{M}_1})J +\cdots+\lambda_n \tau( {\mathcal{M}_n})J \right)\Omega_{2n}.
\end{equation*}The coefficients of the monomial $\lambda_1 \cdots \lambda_n$ gives us
\begin{equation*}
   n!\omega_{\mathcal{M}_1} \wedge\cdots\wedge   \omega_{\mathcal{M}_n}=2^n(n!)^2\det\left (   {\mathcal{M}_1}  ,\cdots, {\mathcal{M}_n}  \right)\Omega_{2n}.
\end{equation*}
The theorem is proved. \hskip 118mm $\Box$

 \section{The   quaternionic Monge-Amp\`{e}re operator and its fundamental solution }

For a point $q=\left(\begin{array}{c}q_0\\ \vdots\\ q_{n-1}\end{array}\right) \in \mathbb{H}^n$, write
\begin{equation}\label{eq:ql}
 q_{l  }:=x_{4l}+x_{4l+1}\mathbf{i}+x_{4l+2}\mathbf{j}+x_{4l+3 }\mathbf{k},
 \end{equation}
$l=0,\ldots,n-1 $. The {\it Cauchy-Fueter
operator}  is
\begin{equation}\label{eq:Cauchy-Fueter}
   \frac {\partial}{\partial {\overline{q_l  }  }}  =\partial_{x_{4l}}+
 \mathbf{i}\partial_{x_{4l+1}}
 + \mathbf{j}\partial_{x_{4l+2}}+ \mathbf{k}\partial_{x_{4l+3  }},
\end{equation}
and its conjugate
\begin{equation}\label{eq:Cauchy-Fueter--}
\frac {\partial}{\partial { {q}_{l }  }}    : =\partial_{x_{4l }}-
 \mathbf{i} \partial_{x_{4l+1}}
 - \mathbf{j}\partial_{x_{4l+2}}- \mathbf{k}\partial_{x_{4l+3  }}.
\end{equation}  The {\it quaternionic Hessian} of a real $C^2$ function $u$ at the point $q$ is the hyperhermitian matrix
  \begin{equation}\label{eq:quaternionic-Hessian}
     \left(\frac {\partial^2 u}{\partial\overline{ q_l}\partial  {q}_k }(q)\right).
 \end{equation}

\vskip 5mm

 {\it Proof of Theorem \ref{thm:det'}}.   Note that it follows from definitions of $\nabla_{A\alpha}$'s in   (\ref{eq:nabla-jj'-new}) and $\triangle_{AB}$'s in (\ref{2.10}) that
 \begin{equation*}
   \textbf{j}\nabla_{(n+k )0'}=-\nabla_{ k1'}\textbf{j},\qquad \textbf{j}\nabla_{(n+k )1'}=\nabla_{ k  0'}\textbf{j},
\end{equation*}
 and \begin{equation*}\begin{aligned}\frac{\partial^2u}{\partial  \overline{{q_l}}\partial q_k}&=\left(\frac{\partial }{\partial x_{4l}}+\textbf{i}\frac{\partial }{\partial x_{4l+1}}+\textbf{j}\frac{\partial }{\partial x_{4l+2}}+\textbf{k}\frac{\partial }{\partial x_{4l+3}}\right)\left(\frac{\partial }{\partial x_{4k}}-\textbf{i}\frac{\partial }{\partial x_{4k+1}}-\textbf{j}\frac{\partial }{\partial x_{4k+2}}-\textbf{k}\frac{\partial }{\partial x_{4k+3}}\right)u\\&=\left(\nabla_{ l 0'}-\nabla_{l1'}\textbf{j}\right)\left(\nabla_{(n+k )1'}-\textbf{j}\nabla_{(n+k )0'}\right)u\\
&=\left(\nabla_{ l 0'}\nabla_{(n+k )1'}-\nabla_{ l 1'}\nabla_{(n+k )0'}\right)u+\left(\nabla_{ l  0'}\nabla_{ k  1'}-\nabla_{ l 1'}\nabla_{ k 0'}\right)u\,\textbf{j}\\
&=2\left(\triangle_{ l(n+k )}u+\triangle_{ l   k  }u\,\textbf{j}\right) \end{aligned}\end{equation*} for $l,k=0,\ldots,n-1$.
So the the quaternionic Hessian  can be written as
\begin{equation*}
  \left (\frac{\partial^2u}{\partial  \overline{{q_l}}\partial q_k}\right)=a+b\mathbf{j}
\end{equation*}
with $n\times n$ complex matrices
\begin{equation*}
   a:=2(\triangle_{ l(n+k )}u),\qquad b:=2(\triangle_{ l   k  }u).
\end{equation*}
Thus
\begin{equation*}\begin{split}
   \tau\left (\frac{\partial^2u}{\partial  \overline{{q_l}}\partial q_k}\right)J&=\left(
                                       \begin{array}{rr}
                                         a &- b \\
\overline{b}&\overline{a}\\
                                       \end{array} \right)J=\left(
                                       \begin{array}{rr}
                                         b &  a \\
-\overline{a}&\overline{b}\\
                                       \end{array} \right) =2\left(
                                       \begin{array}{rr}
                                        \triangle_{ l   k  }u &\triangle_{ l(n+k )}u \\
- \overline{\triangle_{ l(n+k )}u}&\overline{\triangle_{ l k }u}\\
                                       \end{array} \right) .
\end{split}\end{equation*}
We find that
\begin{equation}\label{eq:Hessian-delta}\begin{split}
   \tau\left (\frac{\partial^2u}{\partial  \overline{{q_l}}\partial q_k}\right)J =2\left(\triangle_{ AB  }u\right),
\end{split}\end{equation}
because
\begin{equation*}
   \qquad\overline{ {\triangle}_{l(n+k )} {u}}=\overline{\nabla_{ l 0'}}\overline{\nabla_{(n+k )1'}u}-\overline{\nabla_{ l 1'}}\overline{\nabla_{(n+k )0'}u}= {\nabla_{(n+l )1'}}{\nabla_{ k 0'}u}- {\nabla_{(n+l )0'}}{\nabla_{ k 1'}u}= -{\triangle}_{(n+l ) k }  {u}
\end{equation*} by definition of $\nabla_{A\alpha}$'s in (\ref{eq:nabla-jj'-new}). Similarly,
\begin{equation*}
   \overline{ {\triangle}_{lk} {u}}= {\triangle}_{(n+l )(n+k )}{u}.
\end{equation*}
The result follows from (\ref{eq:Hessian-delta}) by applying Theorem \ref{thm:det} to $\mathcal{M}_j=(\frac{\partial^2u_j}{\partial \overline{{q_l}}\partial q_k} ) $. \hskip 33mm$\Box$

\vskip 5mm

The
first-order differential operators $\nabla_{A\alpha}$'s in (\ref{eq:nabla-jj'-new}) are again the relabelling of the following differential operators   we used in \cite{wan-wang}:
\begin{equation}\label{eq:nabla-jj'}\left(
                             \begin{array}{cc}
                               \nabla_{00' } & \nabla_{01' } \\
                               \nabla_{10' } & \nabla_{11' } \\
                               \vdots&\vdots\\
                               \nabla_{(2l)0' } & \nabla_{(2l)1' } \\
                               \nabla_{(2l+1)0' } & \nabla_{(2l+1)1' } \\
                               \vdots&\vdots
                             \end{array}
                           \right):=\left(
                                      \begin{array}{cc}
                                      \partial_{x_{0}}+\textbf{i}\partial_{x_{1}} & -\partial_{x_{2}}-\textbf{i}\partial_{x_{3}} \\
                                        \partial_{x_{2}}-\textbf{i}\partial_{x_{3}} & \partial_{x_{0}}-\textbf{i}\partial_{x_{1}} \\
                                         \vdots&\vdots\\
                                         \partial_{x_{4l}}+\textbf{i}\partial_{x_{4l+1}} & -\partial_{x_{4l+2}}-\textbf{i}\partial_{x_{4l+3}} \\
                                        \partial_{x_{4l+2}}-\textbf{i}\partial_{x_{4l+3}} & \partial_{x_{4l}}-\textbf{i}\partial_{x_{4l+1}} \\
                                        \vdots&\vdots
                                      \end{array}
                                    \right).
\end{equation}

Let $\Omega$ be a domain in $  \mathbb{H}^n$. Recall that $d_0,d_1:C_0^\infty(\Omega,\wedge^{p}\mathbb{C}^{2n})\rightarrow C_0^\infty(\Omega,\wedge^{p+1}\mathbb{C}^{2n})$ are given by
\begin{equation}\label{eq:d}\begin{aligned}&
d_0F:=
\sum_{ I}  \sum_{A =0}^{2n-1}   \nabla_{A0' }f_{I}~\omega^A\wedge\omega^I,\\
&d_1F:=\sum_{ I}\sum_{A =0}^{2n-1}\nabla_{A1' }f_{I}~\omega^A\wedge\omega^I,
\end{aligned}\end{equation}
for $F=\sum_{I}f_{I}\omega^I\in C_0^\infty(\Omega,\wedge^{p}\mathbb{C}^{2n})$,  where $\omega^I:=\omega^{i_1}\wedge\ldots\wedge\omega^{i_{p}}$ for the multi-index
$I=(i_1,\ldots,i_{p})$.
We say $F$
is \emph{closed} if
\begin{equation*} d_0F=d_1F=0.
\end{equation*} Then we have
\begin{equation*}
  \triangle u=d_0d_1u .
\end{equation*}

\begin{prop}\label{p1.1} {\rm (Proposition 2.2 in \cite{wan-wang})} $($1$)$ $d_0d_1=-d_1d_0$.\\
$($2$)$ $d_0^2=d_1^2=0$.\\
$($3$)$ For $F\in C_0^\infty(\Omega,\wedge^{p}\mathbb{C}^{2n})$, $G\in C_0^\infty(\Omega,\wedge^{q}\mathbb{C}^{2n})$, we have\begin{equation*}d_\alpha(F\wedge G)=d_\alpha F\wedge G+(-1)^{p}F\wedge d_\alpha G,\qquad \alpha=0 ,1 .\end{equation*}
\end{prop}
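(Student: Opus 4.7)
The plan is to verify all three statements by direct computation from the explicit formulas in (\ref{eq:d}), using only two facts: the operators $\nabla_{A\alpha}$ are constant-coefficient first-order differential operators and therefore commute pairwise, and the wedge product satisfies the usual graded-commutativity rule $\omega^A\wedge\omega^I=(-1)^p\omega^I\wedge\omega^A$ when $\omega^I$ has degree $p$.

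For (2), I would expand, for $F=\sum_I f_I\omega^I$,
\begin{equation*}
d_0^2F=\sum_I\sum_{A,B}\nabla_{B0'}\nabla_{A0'}f_I\,\omega^B\wedge\omega^A\wedge\omega^I,
\end{equation*}
and observe that the scalar coefficient $\nabla_{B0'}\nabla_{A0'}f_I$ is symmetric in $A\leftrightarrow B$ while $\omega^B\wedge\omega^A$ is antisymmetric, so each pair of terms cancels. The argument for $d_1^2=0$ is identical. For (1), I would write $d_0d_1F$ and $d_1d_0F$ as the analogous triple sums; after a relabelling $A\leftrightarrow B$ in one of them, which contributes a sign $-1$ from $\omega^B\wedge\omega^A$, and then the commutation $\nabla_{A1'}\nabla_{B0'}=\nabla_{B0'}\nabla_{A1'}$, the identity $d_0d_1F=-d_1d_0F$ drops out.

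For (3), I would take $F=\sum_I f_I\omega^I$ of degree $p$ and $G=\sum_J g_J\omega^J$, write $F\wedge G=\sum_{I,J}f_Ig_J\,\omega^I\wedge\omega^J$, apply $d_\alpha$, and distribute the ordinary Leibniz rule on each $\nabla_{A\alpha}(f_Ig_J)$. The piece carrying the derivative on $f_I$ reassembles directly into $d_\alpha F\wedge G$. In the piece carrying the derivative on $g_J$, the extra factor $\omega^A$ from $d_\alpha$ appears to the left of $\omega^I$; moving it past $\omega^I$ produces exactly the sign $(-1)^p$ and yields $(-1)^p F\wedge d_\alpha G$.

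No step poses a genuine obstacle here; the only care required is bookkeeping of the summation-index swap in the antisymmetrisation arguments for (1) and (2), and a clean count of the sign arising from commuting a $1$-form past a $p$-form in (3). All three properties are thus formal consequences of the explicit definitions together with the commutativity of partial derivatives, in complete parallel with the corresponding identities for $\partial,\overline{\partial}$ in several complex variables.
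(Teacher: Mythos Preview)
Your proposal is correct: the three identities follow exactly as you indicate, by the commutativity of the constant-coefficient operators $\nabla_{A\alpha}$ together with the antisymmetry of the wedge product, and the Leibniz rule for (3) is handled with the correct sign from passing $\omega^A$ across $\omega^I$.

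There is nothing to compare against here, however: the present paper does not supply a proof of this proposition at all. It is quoted verbatim as Proposition~2.2 of \cite{wan-wang} and used as a black box in the subsequent computations (e.g.\ in the proof of Proposition~\ref{p4.2}). Your argument is the standard one and is presumably what appears in \cite{wan-wang}; in any case it is complete and self-contained.
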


$z^{A\alpha}$'s given by
\begin{equation}\label{2.3}\left(
                             \begin{array}{cc}
                               z^{00' } & z^{01' } \\
                               \vdots&\vdots\\
                               z^{( l)0' } & z^{( l)1' } \\ \vdots&\vdots
                           \\z^{n0' } & z^{n1' } \\
                               \vdots&\vdots\\
                               z^{( n+l)0' } & z^{( n+l)1' } \\ \vdots&\vdots                                                         \end{array}
                           \right):=\left(
                                      \begin{array}{cc}
                                      x_{0}-\textbf{i}x_{1} & -x_{2}+\textbf{i}x_{3} \\
                                                                               \vdots&\vdots\\
                                         x_{4l}-\textbf{i}x_{4l+1} & -x_{4l+2}+\textbf{i}x_{4l+3} \\
                                     \vdots&\vdots\\ x_{2}+\textbf{i}x_{3} & x_{0}+\textbf{i}x_{1} \\
                                        \vdots&\vdots\\
                                        x_{4l+2}+\textbf{i}x_{4l+3} &  x_{4l }+\textbf{i}x_{4l+1} \\
                                          \vdots&\vdots
                                      \end{array}
                                    \right)
\end{equation}
can be viewed as independent variables formally and $\nabla_{A\alpha}$'s are derivatives with respect to these variables by the following lemma.
\begin{lem}\label{lem:nabla-z} {\rm (Lemma 3.1 and Corollary 3.1 (1) in \cite{wan-wang})} (1) $\nabla_{A\alpha}z^{B\beta}=2\delta_A^{B}\delta_{\alpha}^{\beta}$.

(2) $\nabla_{A\alpha}(\|q\|^2)=2\overline{z^{A\alpha}}.$
\end{lem}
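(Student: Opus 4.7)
The plan is that both parts reduce to direct computation from the explicit formulas (\ref{eq:nabla-jj'-new}) defining the $\nabla_{A\alpha}$ and (\ref{2.3}) defining the $z^{B\beta}$. The key structural observation that organizes the bookkeeping is that each operator $\nabla_{A\alpha}$ involves partial derivatives only in the four variables $x_{4l},x_{4l+1},x_{4l+2},x_{4l+3}$ of a single block (where $l$ is determined by $A$ via $A=l$ or $A=n+l$), and each $z^{B\beta}$ is a complex combination of only two real variables in a single block. Hence $\nabla_{A\alpha}z^{B\beta}=0$ automatically whenever the blocks of $A$ and $B$ differ, and the whole computation collapses to the case where $A,B\in\{l,n+l\}$ for one fixed $l$.

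For part (1), I would then exhibit the $4\times 4$ array of inner-block products. The four diagonal entries are immediate: for instance $\nabla_{l0'}z^{l0'}=(\partial_{x_{4l}}+\mathbf{i}\partial_{x_{4l+1}})(x_{4l}-\mathbf{i}x_{4l+1})=1+1=2$, and similarly for the other three pairings where $(A,\alpha)=(B,\beta)$. For the off-diagonal entries there are two mechanisms: either the operator and the coordinate polynomial involve disjoint pairs of variables within the block (e.g.\ $\nabla_{l0'}z^{l1'}$), in which case the derivative is zero directly; or they involve the same two variables but the $\mathbf{i}$-coefficients combine to $1+\mathbf{i}\cdot\mathbf{i}=0$ (e.g.\ $\nabla_{l0'}z^{(n+l)1'}$). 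Running through the sixteen cases one verifies $\nabla_{A\alpha}z^{B\beta}=2\delta_A^B\delta_\alpha^\beta$.

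For part (2), I would just write $\|q\|^2=\sum_{j=0}^{4n-1}x_j^2$ and apply each $\nabla_{A\alpha}$ term by term; only the four variables of the block containing $A$ contribute. For example $\nabla_{l0'}\|q\|^2=(\partial_{x_{4l}}+\mathbf{i}\partial_{x_{4l+1}})(x_{4l}^2+x_{4l+1}^2+\cdots)=2x_{4l}+2\mathbf{i}x_{4l+1}=2\overline{z^{l0'}}$, and the other three cases are analogous and produce exactly $2\overline{z^{A\alpha}}$ in each row of (\ref{2.3}). Alternatively one can derive (2) from (1) via the chain-rule observation that $\|q\|^2=\tfrac12\sum_{A,\alpha}z^{A\alpha}\overline{z^{A\alpha}}$ together with the fact that the $\nabla_{A\alpha}$ annihilate the antiholomorphic coordinates $\overline{z^{B\beta}}$ (which follows by the same case analysis as in part (1)).

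There is really no serious obstacle; the only thing one must be careful about is the sign conventions encoded in (\ref{2.3}) and (\ref{eq:nabla-jj'-new})—in particular the minus signs on $z^{l1'}$ and $\nabla_{l1'}$, which precisely match so that the duality $\nabla_{A\alpha}z^{B\beta}=2\delta_A^B\delta_\alpha^\beta$ holds without any sign correction. Once the cases are organized by block as above, the verification is mechanical.
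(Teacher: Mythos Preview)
Your approach is correct. Note, however, that the paper itself does not give a proof of this lemma: it merely states the result and attributes it to Lemma~3.1 and Corollary~3.1~(1) of \cite{wan-wang}. So there is no ``paper's own proof'' to compare against here. Your direct block-by-block verification from the explicit tables (\ref{eq:nabla-jj'-new}) and (\ref{2.3}) is exactly the natural (and essentially only) argument for such an elementary duality statement, and the sample computations you give (the diagonal case $\nabla_{l0'}z^{l0'}=2$, the disjoint-variable vanishing, and the $1+\mathbf{i}^2=0$ cancellation in $\nabla_{l0'}z^{(n+l)1'}$) are all correct and account for every off-diagonal mechanism.
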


The function $-\frac{1}{\|q\|^2}$ is a PSH function (Proposition 4.1 in \cite{wan-wang}), and  can be shown to be the fundamental solution of the quaternionic Monge-Amp\`{e}re operator in   the following simple way by using $d_0$ and $ d_1$.

\begin{prop}\label{p4.2}  {\rm (Proposition 4.1 in \cite{wan-wang})}   $\left(\triangle\left(-\frac{1}{\|q\|^2}\right)\right)^n=\frac{8^nn!\pi^{2n}}{(2n)!}\delta_0\Omega_{2n}$.\end{prop}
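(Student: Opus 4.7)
The plan is to approximate $u = -\|q\|^{-2}$ by the smooth plurisubharmonic functions $u_\epsilon := -(\|q\|^2 + \epsilon)^{-1}$, compute $(\triangle u_\epsilon)^n$ in closed form, and pass to the distributional limit $\epsilon \to 0^+$. Writing $\rho := \|q\|^2$ and $\rho_\epsilon := \rho + \epsilon$, Lemma \ref{lem:nabla-z}(2) gives $d_\alpha \rho = 2\sum_A \overline{z^{A\alpha}}\,\omega^A$. A short direct check from (\ref{eq:nabla-jj'-new}) and (\ref{2.3}) yields $\nabla_{A0'}\overline{z^{B1'}} = 2J_{AB}$ and $\nabla_{A1'}\overline{z^{B0'}} = 2J_{BA}$, and substitution into (\ref{2.10}) collapses everything to the clean identity $\triangle\rho = 8\beta_n$. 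Then the chain rule applied to $u_\epsilon = f(\rho)$ with $f(t) = -1/(t+\epsilon)$, together with the Leibniz rule of Proposition \ref{p1.1}(3), produces
\begin{equation*}
   \triangle u_\epsilon \;=\; \frac{8\beta_n}{\rho_\epsilon^2} \;-\; \frac{2\,d_0\rho \wedge d_1\rho}{\rho_\epsilon^3}.
\end{equation*}

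Next I would take the $n$-fold wedge. Since $d_0\rho\wedge d_1\rho$ is the product of two $1$-forms, its square vanishes, so the binomial expansion of $(\triangle u_\epsilon)^n$ reduces to exactly two terms. Combined with $\beta_n^n = n!\,\Omega_{2n}$, one needs the auxiliary wedge identity
\begin{equation*}
   \beta_n^{n-1} \wedge d_0\rho \wedge d_1\rho \;=\; 4(n-1)!\,\rho\,\Omega_{2n}.
\end{equation*}
This I would prove by noting that $\omega^A \wedge \omega^B \wedge \beta_n^{n-1}$ is a nonzero multiple of $\Omega_{2n}$ only when $\{A,B\} = \{l,n+l\}$ for some $l$; the surviving coefficient $\overline{z^{l0'}}\overline{z^{n+l,1'}} - \overline{z^{n+l,0'}}\overline{z^{l1'}}$ is then identified with $|q_l|^2$ using the explicit formulas (\ref{2.3}), and summing over $l$ recovers $\rho$. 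A short algebraic cancellation then gives
\begin{equation*}
   (\triangle u_\epsilon)^n \;=\; \frac{8^n n!\,\epsilon}{(\|q\|^2 + \epsilon)^{2n+1}}\,\Omega_{2n}.
\end{equation*}

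Finally I would recognize the scalar coefficient as a standard mollifier. The rescaling $q \mapsto \sqrt{\epsilon}\,q$ reduces the total integral to $\int_{\mathbb{H}^n}(1+\|q\|^2)^{-(2n+1)}\,dV$, and spherical coordinates together with the Beta integral $B(2n,1) = 1/(2n)$ and $|S^{4n-1}| = 2\pi^{2n}/(2n-1)!$ evaluate it to $\pi^{2n}/(2n)!$ independent of $\epsilon$. Combined with the uniform vanishing of the coefficient off any neighborhood of $0$, the distributional limit is $\frac{8^n n!\,\pi^{2n}}{(2n)!}\,\delta_0\,\Omega_{2n}$. The main obstacles in this plan are the auxiliary wedge identity of the second paragraph, which requires careful combinatorial bookkeeping (but follows the same pattern as the derivation of $\triangle\rho = 8\beta_n$), and the justification that $\lim_{\epsilon \to 0^+}(\triangle u_\epsilon)^n = (\triangle u)^n$ in the sense of distributions; the latter is supplied by Alesker's continuity of the quaternionic Monge--Amp\`ere operator on continuous plurisubharmonic functions \cite{alesker1}.
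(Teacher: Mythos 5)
Your proposal is correct and follows essentially the same route as the paper: the same regularization $-1/(\|q\|^2+\varepsilon)$, the same identities $d_0d_1\|q\|^2=8\beta_n$ and $z^{l0'}z^{(n+l)1'}-z^{(n+l)0'}z^{l1'}=|q_l|^2$, and the same closed form $\frac{8^n n!\,\varepsilon}{(\|q\|^2+\varepsilon)^{2n+1}}\Omega_{2n}$. The only difference is that you carry out the normalization integral explicitly (correctly, giving $\pi^{2n}/(2n)!$), whereas the paper defers that step to the proof of Proposition 4.1 in \cite{wan-wang}.
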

\begin{proof} Note that
\begin{equation*}d_\alpha\left(-\frac{1}{\|q\|^2+\varepsilon}\right)
= \frac{d_\alpha \|q\|^2}{(\|q\|^2+\varepsilon)^2},
\end{equation*}
and so
\begin{equation*}\bigtriangleup \left(-\frac{1}{\|q\|^2+\varepsilon}\right)= d_0d_1\left(-\frac{1}{\|q\|^2+\varepsilon}\right)
= -\frac{2 d_0 \|q\|^2\wedge d_1\|q\|^2}{(\|q\|^2+\varepsilon)^3}+\frac{d_0d_1 \|q\|^2}{(\|q\|^2+\varepsilon)^2},
\end{equation*}where
\begin{equation}\label{eq:dbeta}
  d_\alpha \|q\|^2=2\sum_{A=0}^{2n-1}  \overline{z^{A\alpha}}\omega^A,
\end{equation} by using Lemma \ref{lem:nabla-z} (2).
In particular, we have
\begin{equation*}
  d_1 \|q\|^2= 2\sum_{l=0}^{ n-1} \left (- {z^{(n+l)0' }}\omega^l+{z^{l0' }}\omega^{n+l}\right),
\end{equation*}
and so
\begin{equation}\label{eq:beta-d}
   d_0d_1 \|q\|^2=8\beta_n,
\end{equation}
by using Lemma \ref{lem:nabla-z} (1). It follows that
\begin{equation}\label{eq:delta-fund}\left(\bigtriangleup \left(-\frac{1}{\|q\|^2+\varepsilon}\right)\right)^n
= -\frac{2\cdot 8^{n-1}n}{(\|q\|^2+\varepsilon)^{2n+1}}d_0 \|q\|^2\wedge d_1\|q\|^2\wedge \beta_n^{n-1}+\frac{8^n}{(\|q\|^2+\varepsilon)^{2n}}\beta_n^{n }
\end{equation}
by $\omega\wedge\omega=0$ for any $1$-form $\omega$ and (\ref{eq:beta-d}).   Note that $\beta_n^n=n!\Omega_{2n} $ and
\begin{equation*}
 {{z^{l0'}}{z^{(n+l)1'}}}- {{z^{(n+l)0'}}{z^{l1'}}}= |q_l |^2,
\end{equation*}
from which we see that
\begin{equation}\label{eq:d-d}
   d_0 \|q\|^2\wedge d_1\|q\|^2=4\sum_{l=0}^{n-1}  |q_l |^2\omega^l\wedge\omega^{n+l}+\sum_{|j-k|\neq n} a_{jk}\omega^j\wedge\omega^{k}.
\end{equation}
Substitute  (\ref{eq:d-d}) into
 (\ref{eq:delta-fund}) to get
 \begin{equation}\label{eq:delta0} \left(\bigtriangleup \left(-\frac{1}{\|q\|^2+\varepsilon}\right)\right)^n =\frac{8^n n!\varepsilon}{(\|q\|^2+\varepsilon)^{2n+1}} \Omega_{2n}.\end{equation}

Let $\varepsilon=0$, we get \begin{equation*}
\left(\triangle\left(-\frac{1}{\|q\|^2}\right)\right)^n=0,\end{equation*}for $q\neq0$. Then we can check $\left(\triangle\left(-\frac{1}{\|q\|^2}\right)\right)^n=\frac{8^nn!\pi^{2n}}{(2n)!}\delta_0\Omega_{2n}$
 by integrating (\ref{eq:delta0})  as in the proof  of Proposition 4.1 in \cite{wan-wang}.
\end{proof}

\section{The invariance of $d_\alpha$ and $\triangle$ under quaternionic linear transformations}

The operators $d_0$ and $d_1$ in (\ref{eq:d}) depend on the choice of the coordinates $x_j$'s and the basis $\{\omega^A \}$. Let us give their transformation formula.

For a  quaternionic vector $q=\left(\begin{array}{l}q_0\\ \vdots\\ q_{n-1}\end{array}\right) \in \mathbb{H}^n$, write
\begin{equation}\label{eq:x-beta}
     q = x^{(0)}+ \mathbf{i}x^{(1)}+ \mathbf{j}x^{(2)}+ \mathbf{k}x^{(3)} ,\qquad {\rm
 with}\qquad
       x^{(\beta)}=\left(\begin{array}{c}  \vdots \\x_{4l+\beta}\\\vdots \end{array}\right) \in   \mathbb{R}^n,
\end{equation}
$\beta=0,1,2,3 $,
 where $x_0,\ldots,x_{4n-1}$ are real coordinates of $q$ given by (\ref{eq:ql}).
A quaternionic   matrix $\mathcal{U}\in M_{\mathbb{H}}(n,n)$  defines a right linear transformation   $\mathbb{H}^{4n}\rightarrow \mathbb{H}^{4n}$ given by $\widetilde{q}= \mathcal{U}q$.
Write
\begin{equation}\label{eq::X-AB}
    \mathcal{U}=U_0+ \mathbf{i}U_1+ \mathbf{j}U_2+ \mathbf{k}U_3,
\end{equation}
for some real $(n\times n)$-matrices $ U_0,U_1, U_2,U_3$.
If we denote
\begin{equation}\label{eq:X-R}
 \mathcal{{U}}^{\mathbb{R}}:=     \left(
\begin{array}{rrrr}U_0&-  U_1&-  U_2&- U_3 \\    U_1&U_0&- U_3&U_2 \\  U_2& U_3& U_0&- U_1 \\ U_3&-U_2&   U_1&U_0 \end{array}\right),\qquad  {q}^{\mathbb{R}}:=\left(\begin{array}{c} x^{(0)}\\x^{(1)}\\ x^{(2)} \\x^{(3)} \end{array}\right)\in\mathbb{R}^{4n} ,
\end{equation}
it is direct to check that
\begin{equation}\label{eq:widetildeX-q}
     {(\mathcal{U} q)}^{\mathbb{R}}=  {\mathcal{U}}^{\mathbb{R}} {q}^{\mathbb{R}},
\end{equation}
(cf. (2.9)-(2.10) in \cite{wang1}).
\begin{prop}\label{prop:U*} Let $\widetilde{\nabla}_{A\alpha}$ be the operators defined by (\ref{eq:nabla-jj'-new}) in terms of the real coordinates $\widetilde{x}$ of $\widetilde{q}=\mathcal{U}q$, i.e. $ \widetilde{q}_l=\widetilde{x}_{4l}+\textbf{i}\widetilde{x}_{4l+1}+\textbf{j}\widetilde{x}_{4l+2}+\textbf{k}\widetilde{x}_{4l+3} $.
Under the transformation $\mathcal{U}$, write $u(q):=\widetilde{u}(\mathcal{U}q)$. Then we have
\begin{equation}\label{eq:U*}
    \left.\nabla_{A\alpha} u\right|_q=\sum_{B=0}^{2n-1}\left.\overline{\tau({\mathcal{U}})}_{B A}\widetilde{\nabla}_{B\alpha} \widetilde{u}\right|_{\mathcal{U}q},
\end{equation}for $\alpha=0',1'$, $A=0,\ldots, 2n-1$.
\end{prop}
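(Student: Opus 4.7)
The plan is to reduce the identity (\ref{eq:U*}) to a finite verification on $\mathbb{C}$-linear test functions and carry it out using Lemma \ref{lem:nabla-z}(1) together with Proposition \ref{prop:tau}(1). Writing $u = \tilde u\circ\mathcal{U}$, both sides of (\ref{eq:U*}) are $\mathbb{C}$-linear first-order constant-coefficient differential operators in $\tilde u$ (the left side via the chain rule, the right side directly), so for each fixed $q$ each side depends only on the $1$-jet of $\tilde u$ at $\tilde q = \mathcal{U}q$, and both vanish on constants. Hence it suffices to verify the identity on a $\mathbb{C}$-basis of the complex-linear combinations of the coordinates $\tilde x_k$. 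From (\ref{2.3}), for each $l = 0,\dots,n-1$ the pair $\{\tilde z^{l0'},\tilde z^{(n+l)1'}\}$ spans $\langle\tilde x_{4l},\tilde x_{4l+1}\rangle_{\mathbb{C}}$ and $\{\tilde z^{l1'},\tilde z^{(n+l)0'}\}$ spans $\langle\tilde x_{4l+2},\tilde x_{4l+3}\rangle_{\mathbb{C}}$, so the $4n$ functions $\{\tilde z^{B\beta}\}_{B,\beta}$ form such a $\mathbb{C}$-basis.

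Next I would establish the key linear identity
\[
\tilde z^{B\beta}(\mathcal{U}q)\;=\;\sum_{A=0}^{2n-1}\overline{\tau(\mathcal{U})}_{BA}\,z^{A\beta}(q).
\]
This rests on two observations. An entry-by-entry comparison of (\ref{eq:tau-A}) and (\ref{2.3}) (with $q_l = a_l+b_l\mathbf{j}$, $a_l = x_{4l}+\mathbf{i}x_{4l+1}$, $b_l = x_{4l+2}+\mathbf{i}x_{4l+3}$) yields $z^{A\alpha} = \overline{\tau(q)_{A\alpha}}$ for all $A,\alpha$. Proposition \ref{prop:tau}(1) applied to $\tilde q = \mathcal{U}q$ gives $\tau(\tilde q) = \tau(\mathcal{U})\tau(q)$, and taking entrywise complex conjugates produces the displayed formula.

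Taking $\tilde u = \tilde z^{B\beta}$, so that $u(q) = \sum_A \overline{\tau(\mathcal{U})}_{BA}\,z^{A\beta}(q)$, Lemma \ref{lem:nabla-z}(1) gives
\[
\nabla_{A\alpha}u\big|_q \;=\; \sum_C \overline{\tau(\mathcal{U})}_{BC}\cdot 2\delta_A^C\delta_\alpha^\beta \;=\; 2\,\overline{\tau(\mathcal{U})}_{BA}\,\delta_\alpha^\beta,
\]
while the same lemma applied in the tilde-coordinates yields
\[
\sum_D \overline{\tau(\mathcal{U})}_{DA}\,\widetilde{\nabla}_{D\alpha}\tilde u\big|_{\tilde q} \;=\; \sum_D \overline{\tau(\mathcal{U})}_{DA}\cdot 2\delta_D^B\delta_\alpha^\beta \;=\; 2\,\overline{\tau(\mathcal{U})}_{BA}\,\delta_\alpha^\beta.
\]
These match, so (\ref{eq:U*}) holds on the spanning set and therefore on every $C^2$ function $\tilde u$. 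The main obstacle is the bookkeeping for the identification $z^{A\alpha} = \overline{\tau(q)_{A\alpha}}$: the block structure of (\ref{2.3}) (rows $0,\ldots,n-1$ versus $n,\ldots,2n-1$) must be carefully matched against the $2\times 2$-block structure of $\tau$ in (\ref{eq:tau-A}), since a sign or index slip there would invalidate the entire argument.
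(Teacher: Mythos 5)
Your proof is correct, but it takes a genuinely different route from the paper's. The paper proves the proposition by brute force: it expands $\nabla_{j0'}u$ and $\nabla_{(n+j)0'}u$ via the chain rule through the real $4n\times 4n$ matrix $\mathcal{U}^{\mathbb{R}}$ of (\ref{eq:X-R}), regroups the four families of real partial derivatives into the combinations $\partial_{\widetilde{x}_{4l}}\pm\mathbf{i}\partial_{\widetilde{x}_{4l+1}}$, $\partial_{\widetilde{x}_{4l+2}}\mp\mathbf{i}\partial_{\widetilde{x}_{4l+3}}$, and recognizes the resulting coefficient blocks $U_0-\mathbf{i}U_1$, $U_2+\mathbf{i}U_3$, etc.\ as the entries of $\overline{\tau(\mathcal{U})}$; the case $\alpha=1'$ is then obtained by conjugation. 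You instead observe that both sides of (\ref{eq:U*}) are constant-coefficient first-order operators in $\widetilde{u}$, so it suffices to test them on a $\mathbb{C}$-spanning set of linear functions, and you choose the functions $\widetilde{z}^{B\beta}$ precisely because Lemma \ref{lem:nabla-z}(1) makes both sides computable in one line once you know the transformation law $\widetilde{z}^{B\beta}(\mathcal{U}q)=\sum_A\overline{\tau(\mathcal{U})}_{BA}z^{A\beta}(q)$. That law in turn follows from the identification $z^{A\alpha}=\overline{\tau(q)_{A\alpha}}$ (which I checked against (\ref{eq:tau-A}) and (\ref{2.3}): all four blocks and signs match) together with the multiplicativity $\tau(\mathcal{U}q)=\tau(\mathcal{U})\tau(q)$ of Proposition \ref{prop:tau}(1) applied to the $n\times 1$ column $q$. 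Your spanning claim is also right: $\{z^{l0'},z^{(n+l)1'}\}=\{x_{4l}\mp\mathbf{i}x_{4l+1}\}$ and $\{z^{l1'},z^{(n+l)0'}\}=\{\mp x_{4l+2}+\mathbf{i}x_{4l+3},\,x_{4l+2}+\mathbf{i}x_{4l+3}\}$ do span the relevant coordinate pairs over $\mathbb{C}$, and Lemma \ref{lem:nabla-z} is an independent direct computation, so there is no circularity. What your approach buys is conceptual clarity and less index bookkeeping: the entire content of the proposition is packaged into the single statement that $\tau$ is a homomorphism and that $\nabla_{A\alpha}$ is dual to $z^{A\alpha}$; what the paper's approach buys is self-containedness (it never needs to interpret $\tau$ on column vectors or invoke the $z$-coordinates) and it produces the explicit block formulas (\ref{eq:nabla-U})--(\ref{eq:nabla-U'}) that are reused implicitly elsewhere. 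Either proof is acceptable.
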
  \begin{proof} In this proof, let $\hat x_a$ be the $a$-th entry of the vector $ {q}^{\mathbb{R}}$ in (\ref{eq:X-R}). It is just relabeling of coordinates $(x_0,\ldots, x_{4n-1})$. Similarly, $\hat {\widetilde{x}}_a$ be the $a$-th entry of the corresponding vector $\widetilde{q}^{\mathbb{R}}$   for $\widetilde{q}$.
By definition, we have
\begin{equation*}\partial_{\hat x_a}\left[\widetilde{u}  \left({\mathcal{U}}^{\mathbb{R}} {q}^{\mathbb{R}}\right)\right]
  = \sum_{b=0}^{4n-1} \left(\mathcal{U}^{\mathbb{R}}\right)_{ba} (\partial_{\hat{ \widetilde{x}}_b}\widetilde{u})(\mathcal{U}q).
\end{equation*}Note that $x^{(0)}_j=\hat x_j$, $x^{(1)}_j=\hat x_{n+j}$, for  $j=0,\ldots,n-1$. Therefore, we have
 \begin{equation}\label{eq:nabla-U}\begin{split}
    \nabla_{j0'}u=& \partial_{x^{(0)}_j}u+\mathbf{i} \partial_{x^{(1)}_j}u=\sum_{b=0}^{4n-1} \left \{\left(\mathcal{U}^{\mathbb{R}}\right)_{bj} \partial_{\hat{ \widetilde{x}}_b}+\mathbf{i}\left(\mathcal{U}^{\mathbb{R}}\right)_{b(n+j)} \partial_{\hat{ \widetilde{x}}_b}\right\}\widetilde{u}\\
    =&\sum_{l=0}^{ n-1}  \left \{(U_0)_{lj}\partial_{\widetilde{x}_{4l}}+(U_1)_{lj}\partial_{\widetilde{x}_{4l+1}}+(U_2)_{lj}\partial_{\widetilde{x}_{4l+2}}
    +(U_3)_{lj}\partial_{\widetilde{x}_{4l+3}}\right\}\widetilde{u}\\
    &+\mathbf{i}\sum_{l=0}^{ n-1}  \left\{-(U_1)_{lj}\partial_{\widetilde{x}_{4l}}+(U_0)_{lj}\partial_{\widetilde{x}_{4l+1}}
    +(U_3)_{lj}\partial_{\widetilde{x}_{4l+2}}-(U_2)_{lj}\partial_{\widetilde{x}_{4l+3}}\right\}\widetilde{u}\\
    =&\sum_{l=0}^{ n-1} \left\{ (U_0-\mathbf{i} U_1)_{lj}(\partial_{\widetilde{x}_{4l}}+\mathbf{i}\partial_{\widetilde{x}_{4l+1}})+( U_2+\mathbf{i} U_3)_{lj}(\partial_{\widetilde{x}_{4l+2}}-\mathbf{i}\partial_{\widetilde{x}_{4l+3}})\right\}\widetilde{u}\\
    =&\sum_{l=0}^{ n-1} \left\{ (U_0-\mathbf{i} U_1)_{lj}\widetilde{\nabla}_{l0'}+( U_2+\mathbf{i} U_3)_{lj}\widetilde{\nabla}_{(n+l)0'}\right\}\widetilde{u}
\end{split} \end{equation}
by the first two columns of $\mathcal{U}^{\mathbb{R}}$ in (\ref{eq:X-R}) and $\nabla_{j\alpha}$'s in (\ref{eq:nabla-jj'-new}).
Similarly, we have
 \begin{equation}\label{eq:nabla-U'}\begin{split}
    \nabla_{(n+j)0'}u=& \partial_{x^{(2)}_j}u-\mathbf{i} \partial_{x^{(3)}_j}u\\
    =&\sum_{l=0}^{ n-1}  \left \{(- U_2+\mathbf{ {i}} U_3)_{lj}(\partial_{\widetilde{x}_{4l}}+\mathbf{i}\partial_{\widetilde{x}_{4l+1}})+(U_0+\mathbf{i} U_1)_{lj}(\partial_{\widetilde{x}_{4l+2}}-\mathbf{i}\partial_{\widetilde{x}_{4l+3}})\right\}\widetilde{u}.
\end{split} \end{equation}
Note that
\begin{equation*}\label{eq:tau-U}\overline{\tau(\mathcal{U})} :=\left(
                                       \begin{array}{rr}
                                         U_0 - \textbf{i} U_1 & -U_2+\textbf{i} U_3  \\
                                    U_2+\textbf{i} U_3     &  U_0+\textbf{i} U_1  \\
                                       \end{array}
                                     \right).\end{equation*}
It follows from (\ref{eq:nabla-U})-(\ref{eq:nabla-U'}) that  (\ref{eq:U*}) holds for $\alpha=0'$. Then (\ref{eq:U*})  for $\alpha=1'$ follows by taking conjugate on both sides of (\ref{eq:nabla-U})-(\ref{eq:nabla-U'}) and noting that $\overline{\nabla_{j0'}}=\nabla_{(n+j)1'} $, $\overline{\nabla_{(n+j)0'}}=-\nabla_{j1'}$ and so for $\widetilde{\nabla}$.
\end{proof}

For a basis
$\{\omega^0,\omega^1,\ldots,\omega^{2n-1}\}$ of $\mathbb{C}^{2n}$, $\{\widetilde{\omega}^0,\widetilde{\omega}^1,\ldots,\widetilde{\omega}^{2n-1}\}$ given by
\begin{equation*}
   \widetilde{\omega}^A:= \overline{\tau(\mathcal{U} )}. \omega^A =\sum_{B=0}^{2n-1}\overline{\tau(\mathcal{U} )}_{AB} \omega^B
\end{equation*}is also a basis for $\mathcal{U}\in \text{GL}_{\mathbb{H}}(n)$.

\begin{prop}\label{prop:d-trans} {\rm (Proposition 2.4 in \cite{wan-wang})}  Let $u$   be a real $ C^1$ functions. Then $d_\alpha u$ is
invariant under a quaternionic linear transformations $\mathcal{U}$ on $\mathbb{H}^n$, i.e.,
\begin{equation*}
  \left.  d_\alpha u\right|_q= \widetilde{d_\alpha} \left. \widetilde{u}\right |_{\mathcal{U}q},
  \end{equation*}
for $\alpha=0',1'$, where $u(q):=\widetilde{u}(\mathcal{U}q)$,  $\widetilde{d_\alpha} \widetilde{u}=\sum_A\widetilde{\nabla}_{A\alpha}\widetilde{u}~\widetilde{\omega}^A$.
\end{prop}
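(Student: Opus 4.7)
The plan is to expand $\widetilde{d_\alpha}\widetilde{u}$ using its definition, substitute the transformation law for the basis $\widetilde{\omega}^A$, and then recognize the resulting coefficient as $\nabla_{B\alpha}u$ via Proposition \ref{prop:U*}. In other words, the whole proposition is a compatibility check: the matrix that transforms $\nabla_{A\alpha}$ into $\widetilde{\nabla}_{A\alpha}$ must be inverse-transposed to the matrix that transforms $\omega^A$ into $\widetilde{\omega}^A$, and both matrices are read off from $\overline{\tau(\mathcal{U})}$.

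Concretely, I would first write
\[
\widetilde{d_\alpha}\widetilde{u}\bigr|_{\mathcal{U}q}
=\sum_{A=0}^{2n-1}\widetilde{\nabla}_{A\alpha}\widetilde{u}\bigr|_{\mathcal{U}q}\,\widetilde{\omega}^A
=\sum_{A,B=0}^{2n-1}\widetilde{\nabla}_{A\alpha}\widetilde{u}\bigr|_{\mathcal{U}q}\,\overline{\tau(\mathcal{U})}_{AB}\,\omega^B,
\]
using the definition of $\widetilde{\omega}^A$ that immediately precedes the proposition. Then I would swap the order of summation and apply the transformation formula (\ref{eq:U*}) from Proposition \ref{prop:U*}, which says
\[
\nabla_{B\alpha}u\bigr|_q=\sum_{A=0}^{2n-1}\overline{\tau(\mathcal{U})}_{AB}\,\widetilde{\nabla}_{A\alpha}\widetilde{u}\bigr|_{\mathcal{U}q}.
\]
Plugging this in gives $\sum_B\nabla_{B\alpha}u|_q\,\omega^B=d_\alpha u|_q$, which is exactly what is required.

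Since the transformation law for $\nabla_{A\alpha}$ in Proposition \ref{prop:U*} is already stated (and proved earlier in the paper) in a form that matches the definition of $\widetilde{\omega}^A$ perfectly, there is really no obstacle of substance: the proof is a one-line bookkeeping identity. The only thing worth stressing is the choice of the new basis: taking $\widetilde{\omega}^A=\overline{\tau(\mathcal{U})}.\omega^A$ (rather than $\tau(\mathcal{U}).\omega^A$) is precisely what makes the contraction with $\overline{\tau(\mathcal{U})}_{AB}\widetilde{\nabla}_{A\alpha}$ collapse to $\nabla_{B\alpha}$. The underlying reason is functoriality: $\overline{\tau(\mathcal{U})}$ is the matrix of the pull-back $\mathcal{U}^*$ on $\mathbb{C}^{2n}$, so $d_\alpha$, being a first-order operator built from a dual pairing of $\nabla_{A\alpha}$ with $\omega^A$, is automatically coordinate-invariant once the two are paired correctly. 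No further lemmas are needed beyond Proposition \ref{prop:U*}.
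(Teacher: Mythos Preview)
Your proposal is correct and essentially identical to the paper's proof: both use only Proposition \ref{prop:U*} together with the definition $\widetilde{\omega}^A=\sum_B\overline{\tau(\mathcal{U})}_{AB}\,\omega^B$ to collapse the double sum into a single one. The only cosmetic difference is direction---the paper starts from $d_\alpha u$ and arrives at $\widetilde{d}_\alpha\widetilde{u}$, whereas you start from $\widetilde{d_\alpha}\widetilde{u}$ and arrive at $d_\alpha u$---but the content is the same one-line identity.
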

\begin{proof}By Proposition \ref{prop:U*}, we get
 \begin{equation*}
   d_\alpha u=\sum_A  \nabla_{A\alpha} {u}\,  \omega^A= \sum_{A,B} \overline{\tau(\mathcal{U})}_{B A}\widetilde{\nabla}_{B\alpha} \widetilde{u} \,  \omega^A=\sum_B  \widetilde{\nabla}_{B\alpha} \widetilde{u}\,  \widetilde{\omega }^B=\widetilde{d}_\alpha \widetilde{u}.
\end{equation*}
 The transformation formula follows.
\end{proof}

\begin{cor}\label{prop:delta} {\rm (Corollary 2.2 in \cite{wan-wang})}  $\triangle u$ and $(\triangle u)^n$ are also invariant under quaternionic linear transformations on $\mathbb{H}^n$, i.e., $\triangle u|_{ q}=\widetilde{\triangle} \widetilde{u}|_{\mathcal{U}q}$.\end{cor}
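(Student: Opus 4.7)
My plan is to apply the scalar transformation law of Proposition \ref{prop:U*} iteratively. Since $\triangle = d_0 d_1$, unfolding by (\ref{eq:d}) twice gives
\begin{equation*}
\triangle u = d_0 d_1 u = \sum_{A,B=0}^{2n-1}\nabla_{A0'}\nabla_{B1'}u\,\omega^A\wedge\omega^B,
\end{equation*}
which, after antisymmetrization in $A,B$, agrees with $\sum_{A,B}\triangle_{AB}u\,\omega^A\wedge\omega^B$ by (\ref{eq:triangle u})--(\ref{2.10}). It therefore suffices to prove this 2-form at $q$ equals $\widetilde{\triangle}\widetilde{u}$ at $\mathcal{U}q$.

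The heart of the argument is a bi-covariant transformation rule for second derivatives. For fixed $B$, view $v_B(q):=\nabla_{B1'}u|_q$ as a scalar function of $q$. By Proposition \ref{prop:U*} applied to $u$, the transformed function $\widetilde{v}_B$ characterized by $v_B(q)=\widetilde{v}_B(\mathcal{U}q)$ is
\begin{equation*}
\widetilde{v}_B=\sum_{D=0}^{2n-1}\overline{\tau(\mathcal{U})}_{DB}\,\widetilde{\nabla}_{D1'}\widetilde{u}.
\end{equation*}
A second application of Proposition \ref{prop:U*}, this time to the scalar $v_B$, followed by pulling the constant coefficients $\overline{\tau(\mathcal{U})}_{DB}$ outside $\widetilde{\nabla}_{C0'}$, yields
\begin{equation*}
\nabla_{A0'}\nabla_{B1'}u\big|_q=\sum_{C,D=0}^{2n-1}\overline{\tau(\mathcal{U})}_{CA}\,\overline{\tau(\mathcal{U})}_{DB}\,\widetilde{\nabla}_{C0'}\widetilde{\nabla}_{D1'}\widetilde{u}\big|_{\mathcal{U}q}.
\end{equation*}

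Substituting this into the unfolded expression for $\triangle u$ and invoking $\widetilde{\omega}^C=\sum_{A}\overline{\tau(\mathcal{U})}_{CA}\omega^A$ to collapse the $A$- and $B$-sums into wedges of the transformed basis, I obtain
\begin{equation*}
\triangle u|_q=\sum_{C,D}\widetilde{\nabla}_{C0'}\widetilde{\nabla}_{D1'}\widetilde{u}\big|_{\mathcal{U}q}\,\widetilde{\omega}^C\wedge\widetilde{\omega}^D=\widetilde{\triangle}\widetilde{u}\big|_{\mathcal{U}q},
\end{equation*}
which is the claimed invariance of $\triangle u$. The assertion for $(\triangle u)^n$ then follows at once, since a pointwise equality of 2-forms is preserved under $n$-fold exterior product. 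The only delicate point in this plan is the index bookkeeping in the iterated use of Proposition \ref{prop:U*}: the transformed counterpart of $\nabla_{B1'}u$ must be identified as a linear combination in the $D$-index and not simply as $\widetilde{\nabla}_{B1'}\widetilde{u}$. Apart from that, the calculation is mechanical.
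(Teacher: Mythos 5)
Your proposal is correct and follows essentially the same route as the paper: the paper also reduces the claim to two successive applications of Proposition \ref{prop:U*} (the first packaged as Proposition \ref{prop:d-trans} for the inner $d_1$, the second applied directly to the scalar $(\widetilde{\nabla}_{B1'}\widetilde{u})(\mathcal{U}q)$), followed by absorbing the matrices $\overline{\tau(\mathcal{U})}$ into the transformed basis $\widetilde{\omega}^C$. Your bi-covariant formula for $\nabla_{A0'}\nabla_{B1'}u$ is just the fully unfolded version of that computation, and the passage to $(\triangle u)^n$ by taking exterior powers of a pointwise equality of $2$-forms is exactly what is intended.
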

\begin{proof} We have \begin{equation*}\begin{aligned}\triangle u(q)&=d_0d_1u(q)=d_0 \left[(\widetilde{d_1}\widetilde{u})(\mathcal{U}q)\right]=\sum_{A,B}\nabla_{A0'}\left[(\widetilde{\nabla}_{B1'}\widetilde{u})(\mathcal{U}q)\right]~\omega^A
\wedge\widetilde{\omega}^B\\
&=\sum_{A,B,C}\overline{\tau(\mathcal{U})}_{C A}\left(\widetilde{\nabla}_{C0'}\widetilde{\nabla}_{B1'}\widetilde{u}\right)(\mathcal{U}q)~\omega^A
\wedge\widetilde{\omega}^B
=\sum_{ B,C} \left(\widetilde{\nabla}_{C0'}\widetilde{\nabla}_{B1'}\widetilde{u}\right)(\mathcal{U}q)~\widetilde{\omega}^C\wedge\widetilde{\omega}^B\\&
=(\widetilde{\triangle} \widetilde{u})(\mathcal{U}q) \end{aligned}\end{equation*}by applying Proposition \ref{prop:U*}-\ref{prop:d-trans}.\end{proof}

Hence $\triangle_n u$ is also invariant under quaternionic unitary transformation $\mathcal{U}$ by $\mathcal{U}.\Omega_{2n}=\Omega_{2n}$, i.e. \begin{equation*}
  \triangle_n(u_1,\ldots,u_n)(q)=\widetilde{\triangle}_n(\widetilde{u}_1,\ldots,\widetilde{u}_n)(\mathcal{U}q).
\end{equation*}

 \end{document}